\newtheorem{theorem}{Theorem}[section]
\newtheorem{lemma}[theorem]{Lemma}
\newtheorem{corollary}[theorem]{Corollary}
\numberwithin{equation}{section}
\newcommand\quant{\advance\quantno by1
                      \ifnum\quantno=1\qquad\else\quad\fi\forall }
\newcommand\rest[1]{\kern-.1em
          \lower.5ex\hbox{$\scriptstyle #1$}\kern.05em}
\newcommand\set[1]{{\left\{#1\right\}}}
\renewcommand\mod[1]{\left\vert{#1}\right\vert}
\newcommand\norm[2]{{\Vert{#1}\Vert_{#2}}}
\newcommand\bignorm[2]{{\bigl\Vert{#1}\bigr\Vert_{#2}}}
\newcommand\wrt{\,\mathrm{d}}
\newcommand\1{{\bf 1}}
\newcommand\BC{\mathbb{C}}
\newcommand\BN{\mathbb{N}}
\newcommand\BR{\mathbb{R}}
\newcommand\BZ{\mathbb{Z}}
\newcommand\cB{\mathcal{B}}
\newcommand\cL{\mathcal{L}}   
\newcommand\cM{\mathcal{M}}
\newcommand\mlo {\cM_{\textnormal{loc}}}
\newcommand\ga{\gamma}
\newcommand\la{\lambda}
\newcommand\funnyk{k\hbox to 0pt{\hss\phantom{g}}}
\newcommand\lu[1]{L^1(#1)}
\newcommand\huat[1]{H^1(#1)}
\newcommand\whH{\widehat{\phantom{G}}\hbox to 0pt{\hss $H$}}
\newcommand\emspace{\hbox to 6pt{\hss}}
\newcommand\rmi{\hbox{\rm (i)}}
\newcommand\rmii{\hbox{\rm (ii)}}
\newcommand\rmiii{\hbox{\rm (iii)}}
\newcommand\One{{\mathbf{1}}}
\newcommand\OU{{Ornstein--Uhlenbeck}\ }
\newcommand\e{\mathrm{e}}
\newcommand\supp{\mathrm{supp}\,}
\begin{document}

\title[]{A maximal function characterisation \\ of the Hardy space \\ for 
the Gauss measure.}

\subjclass[2000]{42B25, 42B30}

\keywords{Gauss measure, Hardy space, BMO, maximal characterization}

\thanks{Work partially supported by
PRIN 2009 ``Analisi Armonica''.}

\author[G. Mauceri, S. Meda and P. Sj\"ogren]
{Giancarlo Mauceri, Stefano Meda and Peter Sj\"ogren}

\address{G. Mauceri: Dipartimento di Matematica \\
Universit\`a di Genova \\ via Dodecaneso 35, 16146 Genova \\ Italia\\ {\it mauceri@dima.unige.it}}
\address{S. Meda: Dipartimento di Matematica e Applicazioni
\\ Universit\`a di Milano-Bicocca\\
via R.~Cozzi 53\\ 20125 Milano\\ Italy\\ {\it stefano.meda@unimib.it}}
\address{P. Sj\"ogren:  Mathematical Sciences\\ 
University of Gothenburg\\ and  Mathematical Sciences\\Chalmers \\
 SE-412 96 G\"oteborg \\ Sweden\\ {\it peters@chalmers.se}}

\begin{abstract}
An atomic Hardy space  $\huat{\gamma}$ associated to the Gauss measure $\ga$ 
in $\BR^n$ has been introduced by the first two
authors. We first prove that it is equivalent to use $(1,r)$- or $(1,\infty)$-atoms
to define this $\huat{\gamma}$. For $n=1$, a maximal function characterisation
 of $\huat{\gamma}$ is found. In arbitrary dimension, we give  a description of the 
 nonnegative
functions in $\huat{\gamma}$ and  use it to prove 
that $L^p(\gamma)\subset H^1(\gamma)$ for $1<p\le\infty$. 
\end{abstract}
\maketitle

\section{Introduction}
Denote by $\ga$ the Gauss measure on $\BR^n$, i.e., the probability
measure with density $\gamma_0(x)=\pi^{-n/2}\, \e^{-\mod{x}^2}$ 
with respect to the Lebesgue measure $\lambda$. Harmonic analysis 
on the measured metric space $(\BR^n,d,\ga)$, where 
$d$ denotes the Euclidean distance on $\BR^n$, has been the object
of many investigations.  In particular, efforts have been made
to study operators related to the \OU\ semigroup, with emphasis
on maximal operators \cite{S,GU,MPS1, GMMST2, MvNP}, 
Riesz transforms \cite{Mu, Gun, M1, Pisier, Pe, Gut, GST, FGS, FoS, 
GMST1, PS, U, DV, MM}, functional calculus \cite{GMST2,GMMST1,HMM,MMS} 
and, recently, tent spaces \cite{MvNP2}.
\par
In \cite{MM} the first two authors defined an atomic Hardy-type space 
$\huat{\ga}$ and a space $BMO(\gamma)$ of functions of bounded 
mean oscillation, associated to~$\ga$.  We briefly recall their 
definitions.  A closed Euclidean ball $B$ is called \emph{admissible} at scale 
$s>0$ if 
$$ 
r_B \leq s\ \min\bigl(1,1/\mod{c_B}\bigr);
$$
here and in the sequel $r_B$ and $c_B$ denote the radius and the centre of $B$, 
respectively. We  denote by $\cB_s$ the family of all balls 
admissible at scale $s$. For the sake of brevity, we shall refer to balls 
in $\cB_1$ simply as admissible balls. Further, $B$ will be called 
\emph{maximal admissible} if $r_B =  \min(1,1/\mod{c_B})$.

Now let $r\in(1,\infty]$. A  \emph{Gaussian} $(1,r)$-\emph{atom}  is either 
the constant function $1$ or 
a function $a$ in $L^r(\gamma)$
supported in an admissible ball $B$ and such that
 \begin{equation}\label{f: prop atom}
\int a \, \wrt \ga =0  \qquad \textnormal{and}\qquad \norm{a}{r}\leq 
\ga(B)^{1/r-1};\
\end{equation}
here and in the whole paper, $\norm{\cdot}{r}$ denotes the norm in  $L^r(\gamma)$.
In the latter case, we say that the atom $a$ is associated to the ball $B
$.
The space $H^{1,r}(\ga)$ is then the vector space of all functions 
$f$ in $\lu{\ga}$ that admit a decomposition of the form 
$\sum_j \la_j \, a_j$ where the $a_j$ are Gaussian $(1,r)$-atoms 
and the sequence of complex numbers $\{\la_j\}$ is summable. 
The norm of $f$ in $H^{1,r}(\ga)$ is defined as the infimum
of $\sum_j\mod{\la_j}$ over all representations of $f$ as above. 

In \cite{MM} 
the spaces $H^{1,r}(\gamma)$ were defined and proved to coincide for all 
$1<r<\infty$, with equivalent norms.
In Section \ref{s:prel} we complement this by 
proving that they coincide also with the space $H^{1,\infty}(\gamma)$. 
Once this is established, we shall denote
the space by $H^1(\gamma)$ and use  the $H^{1,\infty}(\gamma)$ norm. 
Further, we shall frequently write atom for $(1,\infty)$-atom.
\par

The space $BMO(\gamma)$ consists of all functions $f$ in $L^1
(\gamma)$ such that
$$
\sup_{B\in\cB_1} \frac{1}{\gamma(B)}\int_B\mod{f-f_B}\wrt \gamma<
\infty,
$$
where  $f_B$ denotes the mean value of $f$ on $B$, taken with respect to 
the Gauss measure. The norm of a function in $BMO(\gamma)$ is
$$
\norm{f}{BMO(\gamma)}=\norm{f}{1}+\sup_{B\in\cB_1} \frac{1}
{\gamma(B)}\int_B\mod{f-f_B}\wrt \gamma.
$$\par
If, in the definitions of $\huat{\gamma}$ and $BMO(\gamma)$, we 
replace the family $\cB_1$ of admissible balls at scale $1$ by $\cB_s$ 
for any fixed $s>0$, we obtain the same spaces with equivalent 
norms, see \cite{MM}. We remark that a similar $H^1-BMO$ theory for 
more general measured metric  spaces has been developed by A. Carbonaro 
and the first two authors in \cite{CMM1, CMM2, CMM3}.\par
The main motivation for introducing these two spaces was to provide 
endpoint estimates for singular integrals associated to the Ornstein-Uhlenbeck 
operator $\cL=-(1/2)\Delta+x\cdot\nabla$, a natural self-adjoint Laplacian  
on $L^2(\gamma)$. Indeed, in \cite{MM} the first 
two authors proved that the imaginary powers of $\cL$ are 
bounded from $\huat{\gamma}$ to $L^1(\gamma)$ and from  $L^\infty
(\gamma)$ to $BMO(\gamma)$ and that  Riesz transforms of the form $\nabla^
\alpha\cL^{-\mod{\alpha}}$ and of any order are bounded from $L^\infty
(\gamma)$ to $BMO(\gamma)$. In a recent 
paper \cite{MMS2}, the authors proved that boundedness from  $\huat{\gamma}$ to 
 $L^1(\gamma)$ and  from  $L^\infty(\gamma)$ to $BMO(\gamma)$
 holds  for any first-order Riesz transform in   dimension one,
but not always in higher dimension.
\par
The definition of the space $\huat{\ga}$ closely resembles the atomic 
definition of the classical Hardy space $H^1(\lambda)$ on $\BR^n$ 
endowed with the Lebesgue measure $\lambda$, but there are two 
basic differences. 
First, the measured metric space $(\BR^n, d, \ga)
$ is non-doubling. Further, except for the constant atom, 
a Gaussian atoms must have ``small support'',
i.e.,  support  contained in an admissible ball. Despite these 
differences, $H^1(\gamma)$ shares many of the properties of $H^1(\lambda)$. 
In particular, the topological
dual of $\huat{\gamma}$ is isomorphic to $BMO(\gamma)$, an 
inequality of John-Nirenberg type holds for functions in
$BMO(\gamma)$ and the spaces $L^p(\gamma)$ are intermediate 
spaces between $\huat{\gamma}$ and $BMO(\gamma)$ for the real 
and the complex
interpolation methods.\par
It is well known that the classical Hardy space $H^1(\lambda)$ can be defined
in at least three different ways:  the \emph
{atomic} definition, the \emph{maximal} definition and the definition 
based on \emph{Riesz transforms} \cite{CW, St2}.
\par
As shown in \cite{MMS2}, in higher dimension the 
first-order Ornstein-Uhlenbeck Riesz transforms $\partial_j\cL^{-1/2}$ are 
unbounded from $\huat{\gamma}$ 
to $L^1(\gamma)$; here $\partial_j = \partial/\partial_{x_j}$. Thus  $\huat{\gamma}$ 
does not coincide with 
the space of all functions in $L^1(\gamma)$ such that
 $\partial_j\cL^{-1/2}f \in L^1(\gamma)$ for $j=1,\ldots,n$. \par
This paper arose from the desire to find a maximal characterisation of 
the space $\huat{\gamma}$.  We recall that the classical space $H^1(\lambda)$ 
can be characterised as the space of all functions $f$ in $L^1(\lambda)$ whose 
\emph{grand maximal function} 
\begin{equation}\label{gmf}
\cM f(x)=\sup\set{\mod{\phi_t*f(x)}: {\phi\in \Phi},\, t>0}
\end{equation}
is also in $L^1(\lambda)$. Here
 $\Phi=\set{\phi \in C_c^1\big(B(0,1)\big): \mod{D^{\alpha}\phi}\le 1 \:
\mathrm{for} \: \mod{\alpha}=0,1}$
and $\phi_t(x)=t^{-n}\phi(x/t)$.\par  
To characterise $H^1(\gamma)$, we introduce the \emph{local 
grand maximal function} defined on $L^1_{\rm loc}(\BR^n,\gamma)$ by
$$
\mlo f(x)=\sup\set{\mod{\phi_t*f(x)}: {\phi\in \Phi},\: 0<t< \min(1, 1/|x|)}.
$$
In Section \ref{maxchar1} we shall prove that, in arbitrary dimension,
$f\in \huat{\gamma}$ implies $\mlo f\in L^1(\gamma)$. Moreover, \emph
{in dimension one}  $\huat{\gamma}$ can be characterised 
as the space of all functions $f$ in $L^1(\gamma)$ satisfying 
$\mlo  f \in L^1(\gamma)$ and  the following  additional \emph{global} 
condition
\begin{equation}\label{GC}
E(f)=\int_{0}^\infty x\ \left(\mod{\int_x^\infty f\wrt\gamma}+\mod{\int_{-
\infty}^{-x} f\wrt \gamma}\right) \wrt\lambda(x)<\infty.
\end{equation}
This is Theorem \ref{char} below.
\par
Roughly speaking, if we interpret a function $f$ as a density of electrical charge 
on the real line, this global condition says  that   the positive and 
negative charges nearly balance out, so that the net charges inside the 
intervals  $(-\infty,-x)$ and $(x,\infty)$ decay sufficiently fast as $x$ 
approaches $+\infty$. The 
condition is violated when the distance between the positive and the 
negative charges increases too much or the charges do not decay 
sufficiently fast at infinity. 
For instance, let $(a_n)_1^\infty$ and $(a_n')_1^\infty$ be increasing
sequences in $(2, \infty)$ such that 
\[
a_n + 2/a_n < a_n' \quad \mathrm{and} \quad a_n' + 2/a_n' < a_{n+1} < 2a_n
\]
for all $n$. Then set
\begin{equation}
  \label{eq:ex}
  f = \sum_1^\infty c_n \left(\frac{\chi_{(a_n,a_n+1/a_n)}}{\ga(a_n,a_n+1/a_n)}
-\frac{\chi_{(a_n',a_n'+1/a_n')}}{\ga(a_n',a_n'+1/a_n')}\right)
\end{equation}
for some $c_n > 0$. One easily verifies that  $\mlo f \in L^1(\ga)$ 
if and only if $\sum c_n < \infty$. But the global condition $E(f) < \infty$
is equivalent to $\sum c_n a_n(a_n' - a_n) < \infty$, which is here  a
stronger condition.
\par\medskip

We have not been able to find a similar characterisation of $\huat
{\gamma}$ in higher dimension. However, in Section \ref{maxcharpos} 
we prove in all dimensions that if $\mlo{f} \in L^1(\gamma)$ and the 
function $f$ 
satisfies the stronger global condition
$$
E_+(f)=\int \mod{x}^2 \mod{f(x)}\wrt\gamma(x)<\infty,
$$
then $f \in \huat{\gamma}$.  Observe that for $n=1$ and $f \ge 0$,
Fubini's theorem implies that the conditions $E(f)<\infty$ and 
$E_+(f)<\infty$ are equivalent. In arbitrary dimension, $E_+(f)$  can
be used to characterise the nonnegative functions in $\huat{\gamma}$;
see Theorem \ref{pos}.
This also leads to a simple proof of the inclusions $L^p
(\gamma)\subset \huat{\gamma}$ and 
$BMO(\gamma)\subset L^{p'}(\gamma)$ for $1<p\le\infty$.

We end the introduction with some technical observations and notation.
In the following we use repeatedly the fact that on 
admissible balls at a fixed scale $s$, the Gauss and the Lebesgue 
measures are equivalent, i.e., there exists a constant $C(s)$ such that 
for every measurable subset $E$ of $B\in\cB_s$
\begin{equation}\label{lgeq}
C(s)^{-1} \gamma(E)\le \,\gamma_0(c_B) \lambda(E)\le\,C(s)\gamma
(E).
\end{equation}
In particular this implies that the Gauss measure is doubling on balls 
in $\cB_s$, with a  constant that depends on $s$ (see \cite
[{Prop. 2.1}]{MM}). Further, it is straightforward to see that if $B'\subset B$
are two balls and $B\in \cB_s$ then $B'$ is also in $\cB_s$.
\par
Given a ball $B$ in $\BR^n$ and a positive number $\rho$, we shall 
denote by $\rho B$ the ball with the same centre and with radius $\rho 
r_B$.

In the following  $C$  denotes a constant whose value may change from occurrence 
to occurrence and which depends only on the dimension $n$, except when otherwise 
explicitly stated.

\section{Coincidence of $H^{1,\infty}(\gamma)$ and $H^{1,r}(\gamma)$} \label{s:prel}

First we need a lemma which will play a role also in the maximal 
characterisation. It  deals with the classical Hardy space  $H^1(\lambda)$ with respect 
to the Lebesgue measure and the associated standard $(1,\infty)$-atoms, called  Lebesgue atoms below.  
The result is probably well known, but we include a proof because we have not been able 
to find a reference in the literature. Some related results can be found in  \cite{CKS} and \cite{CMS}.

\begin{lemma}\label{cptsupp}
If $g\in H^1(\lambda)$  and the support of $g$ is contained in a ball $B$, 
then $g$ has an atomic decomposition $g=\sum_k\lambda_k a_k$ 
where the $a_k$ are Lebesgue  $(1,\infty)$-atoms associated to balls
contained in $2B$ and 
\begin{equation}\label{ne}
\sum_k \mod{\lambda_k}\le C\,\norm{g}{H^1(\lambda)}.
\end{equation}
\end{lemma}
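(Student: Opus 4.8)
The plan is to produce the atomic decomposition by taking an arbitrary atomic decomposition of $g$ in $H^1(\lambda)$, then modifying each Lebesgue atom so that its support lands inside $2B$ while controlling the $\ell^1$ norm of the new coefficients. The point is that a standard $(1,\infty)$-atom appearing in a decomposition of $g$ need not be supported near $B$ at all; the support condition on $g$ is a global constraint on the sum, not on individual atoms. So the main work is to exploit the cancellation coming from $\supp g \subset B$ to replace far-away atoms by contributions supported near $B$.

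Concretely, I would start from a decomposition $g = \sum_k \mu_k b_k$ with $\sum_k |\mu_k| \le 2\,\norm{g}{H^1(\lambda)}$, where the $b_k$ are Lebesgue $(1,\infty)$-atoms associated to balls $B_k$. Split the index set according to whether $B_k$ meets $2B$ or not, or more usefully according to the size of $B_k$ relative to $B$. The atoms associated to balls contained in $2B$ are already of the desired form and can be kept. For the remaining atoms I would use the fact that $g$ is supported in $B$: outside $B$ the tails of all the atoms must cancel. The cleanest route is to write $g = g\,\charfn{B}$ and apply a Calderón--Zygmund / good-$\lambda$ type argument, or alternatively invoke the known local-to-global atomic theory: a function in $H^1(\lambda)$ supported in $B$ can be reconstructed from its restriction using a single correcting atom that carries the net mass, which vanishes since $\int g\wrt\lambda = 0$ (an $H^1$ function has mean zero). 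This last point is the key leverage: the vanishing integral lets us absorb the error term into one more atom supported in $B$ rather than leaving a nonzero constant multiple of $\charfn{B}/\lambda(B)$.

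The step I expect to be the main obstacle is controlling the $\ell^1$ norm when truncating atoms to $2B$. If one simply multiplies each $b_k$ by $\charfn{2B}$, the truncated piece $b_k\charfn{2B}$ need not have mean zero, so one must subtract a normalising multiple of $\charfn{2B}/\lambda(2B)$, and then collect all these normalising terms into a correction $c\,\charfn{2B}/\lambda(2B)$. The delicate part is showing $\sum_k |\mu_k|\cdot |\!\int b_k\charfn{2B}\wrt\lambda|$ is bounded by $C\,\norm{g}{H^1(\lambda)}$ uniformly, using that the total correction constant $c$ equals $\int_{2B}(g - g\charfn{2B})\wrt\lambda = 0$ again by the support hypothesis, so the accumulated error is itself controllable and can be realised as a bounded multiple of a single atom on $2B$.

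An alternative and perhaps cleaner approach, which I would pursue in parallel, avoids truncating individual atoms: decompose $g$ directly using a grid of dyadic cubes adapted to $B$, i.e. run the Calderón--Zygmund decomposition of $g$ at the height $\alpha \sim \norm{g}{H^1}/\lambda(B)$ and extract atoms supported on the resulting stopping cubes, all of which are naturally contained in a fixed dilate of $B$ once we observe $\supp g \subset B$. The bound \pref{ne} then follows from the standard estimate $\sum_k |\lambda_k| \le C\,\norm{g}{H^1(\lambda)}$ for the maximal-function atomic decomposition, together with the geometric fact noted in the excerpt that any subball of an admissible ball is admissible at the same scale, ensuring the stopping cubes stay inside $2B$. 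I expect this route to make the support localisation automatic, leaving only the routine verification of the atom size and cancellation conditions.
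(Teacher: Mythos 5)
Your first route --- modifying the atoms of a near-optimal decomposition $g=\sum_k\mu_k b_k$ --- has a genuine gap that the vanishing of the total correction constant does not repair. The problem is the atoms whose balls $B_k$ are small and straddle the boundary of $2B$. For such an atom the truncated, recentred function $\tilde b_k = b_k\One_{2B}-\bigl(\int_{2B}b_k\wrt\lambda\bigr)\One_{2B}/\lambda(2B)$ is \emph{not} a bounded multiple of an atom on a ball inside $2B$, nor even uniformly bounded in $H^1(\lambda)$: if $B_k$ has radius $\epsilon$ and centre on $\partial(2B)$, with $b_k$ carrying mass $+1/2$ on its outer half and $-1/2$ on its inner half, then $b_k\One_{2B}$ is a spike of mass $-1/2$ at scale $\epsilon$ whose compensation appears only at scale $r_B$, so $\cM\tilde b_k(x)\gtrsim\mod{x-c_{B_k}}^{-n}$ for $\epsilon\lesssim\mod{x-c_{B_k}}\lesssim r_B$, whence $\norm{\tilde b_k}{H^1(\lambda)}\gtrsim\log(r_B/\epsilon)$. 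There is also a geometric obstruction to the support condition you must meet: a spherical cap at the boundary of $2B$ is contained in no ball inside $2B$ other than (essentially) $2B$ itself, so the small support of a truncated boundary atom cannot be exploited. Hence no term-by-term estimate $\sum_k\mod{\mu_k}\,\norm{\tilde b_k}{}\le C\sum_k\mod{\mu_k}$ can hold; the needed cancellation is \emph{between} different truncated atoms, and an arbitrary (even near-optimal) decomposition gives no quantitative hold on it. The identity $\sum_k\mu_k\int_{2B}b_k\wrt\lambda=0$ controls only the means, not this logarithmic loss. (Also, ``invoking the known local-to-global atomic theory'' would be circular: that theory \emph{is} the lemma being proved.)

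Your alternative route is the one the paper actually takes, but as described it omits the two steps that make it work. First, localisation of the stopping regions is not automatic: what one proves (easily, since for $x\notin 2B$ a nonzero $\phi_t*g(x)$ forces $t\ge d(x,B)\ge r_B$) is the bound $\cM g(x)\le C\norm{g}{1}/\mod{B}$ for $x\notin 2B$; consequently only the level sets $\Omega_k=\set{\cM g>2^k}$ with $2^k\ge C\norm{g}{1}/\mod{B}$, and only the atoms the Stein construction produces at those heights, lie inside $2B$. (Your appeal to the fact that subballs of admissible balls are admissible is beside the point: admissibility is a Gaussian notion, and this lemma lives entirely in $H^1(\lambda)$.) Second, the atoms produced at the low heights sit on balls that need not be inside $2B$ and cannot simply be discarded or relocated one by one: the paper gathers their sum $\Sigma_1$ into a \emph{single} multiple of an atom on $2B$, observing that $\Sigma_1=g-\Sigma_2$ is supported in $2B$ with vanishing integral, and that the pointwise bound $\mod{\sum_i A_i^k}\le C2^k$, summed geometrically over $2^k< C\norm{g}{1}/\mod{B}$, gives $\mod{\Sigma_1}\le C\norm{g}{H^1(\lambda)}/\mod{2B}$. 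This sub-threshold step is exactly where your chosen height $\alpha\sim\norm{g}{H^1(\lambda)}/\lambda(B)$ and the cancellation of $g$ enter, and it is not a routine verification; with it supplied, your second approach becomes the paper's proof.
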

\begin{proof} 
In this proof, all atoms are Lebesgue  $(1,\infty)$-atoms.
We claim that  the grand maximal function of $g$ satisfies 
\begin{equation}\label{MgIc}
\cM g(x)\le\,\frac{C\,\norm{g}{1}}{\mod{B}}\qquad\forall x\notin 2B.
\end{equation}
To prove this, take $x \notin 2B$ and observe that
$$
\cM g(x) =\sup_{\phi\in\Phi}\, \sup_{t\ge d(x,B)}\mod{\phi_t*g(x)},
$$
since $\phi_t*g(x)=0$ for $0<t < d(x,B)$. Because of the vanishing
integral of $g$, one has for $\phi\in \Phi$ and $t \ge d(x,B)$
\begin{align*}
\mod{\phi_t*g(x)}&\le \,\int_{B}\mod{\phi_t(x-y)-\phi_t(x-c_B)}\,\mod{g
(y)} \wrt\lambda(y)
\\ 
& \le\,C\,t^{-n-1}\int_{B}\mod{y-c_{B}}\,\mod{g(y)}\wrt\lambda(y)\\ 
&\le\, \frac{C\,r_{B}}{d(x,B)^{n+1}}\, \norm{g}{1}\\
&\le\, \frac{C
 }{\mod{B}}\, \norm{g}{1}.
\end{align*}
\par
 For each integer $k$, denote by $\Omega_k$ the level set $\set{x: 
\cM g(x)>2^k}$. 
 Then (\ref{MgIc}) implies that 
\begin{equation}\label{hi}
 \Omega_k\subset 2B \quad \textrm{when}\quad  2^k \ge \frac{C\,
\norm{g}{1}\,}{ \mod{B}}.
\end{equation}
Let $\Omega_k=\bigcup_i Q^k_i$ be a Whitney decomposition of $
\Omega_k$  into closed cubes $Q^k_i, \; i \in \BN$, whose interiors are disjoint, 
and whose diameters are comparable to $\delta$ times their distances 
from $\Omega_k^c$, where $\delta$ is a (small) positive constant to 
be chosen later. Define $\tilde{Q}_i^k$ as the cube with the same centre 
as $Q_i^k$ and side length expanded by a factor $2$. Then $\bigcup_i\tilde{Q}^k_i=
\Omega_k$ and the family $\set{\tilde{Q}^k_i: i\in \BN}$ will have the 
bounded overlap property, uniformly in $k$, provided that $\delta$ is small enough. 
Proceeding as in the proof of the 
atomic decomposition for $H^1(\lambda)$ in \cite[p. 107--109]{St2}, 
one  shows that there exists a decomposition
\begin{equation}\label{ad}
g=\sum_{k,i} A^k_i,
\end{equation} 
with the following properties.
\begin{itemize}
\item[\rmi] Each function $A^k_i$ is supported in a ball $B^k_i$ that contains 
the cube $\tilde{Q}^k_i$ as well as those $\tilde{Q}^{k+1}_m$ that 
intersect $\tilde{Q}^k_i$. 
Moreover,  if $\delta$ is sufficiently small, the ball $B^k_i$ is 
contained in $\Omega_k$ and for each $k$ the family $\set{B^k_i}_i$ 
has the bounded overlap property.
\item[]
\item[\rmii]  $A^k_i=\lambda_{k,i} \,a^k_i$, where $a^k_i$  is 
a Lebesgue atom associated to the ball $B^k_i$ and 
$$\sum_{k,i} \mod{\lambda_{k,i}}\le C\, \norm{g}{H^1(\lambda)}.
$$
\item[]
\item[\rmiii] $\mod{A^k_i}\le C\,2^k$ for each $k$ and $i$.

\end{itemize}
 We split the sum in (\ref{ad}) in two parts
\begin{align*}
g&=\sum_{B^k_i\not\subset 2B}A^k_i\ +\sum_{B^k_i\subset 
2B} A^k_i
=\Sigma_1+\Sigma_2. 
\end{align*}
Clearly, $\Sigma_2$ is an atomic decomposition with atoms 
associated to  balls contained in $2B$. Thus, it suffices to show that $ 
\Sigma_1$ is a multiple of
an atom associated to $2B$. And indeed, $\Sigma_1$ is supported in $2B
$ and has integral zero, because it is the difference of $g$ and $
\Sigma_2$, both of which have these two properties.
Moreover, by \rmi\   and  \rmiii\  
$$
\mod{\sum_i A^k_i}\le C\,2^k.
$$
   Hence by (\ref{hi})
$$
\mod{\Sigma_1}\le\sum_{ 2^k \,<\,C\,\norm{g}{1}/\mod{B}}\mod
{\sum_i A^k_i}\le \frac{C\,\norm{g}{1}}{\mod{B}}\le \frac{C}{\mod
{2B}}\,\norm{g}{H^1(\lambda)}.
$$
Thus $\Sigma_1$ is a multiple of an atom associated 
to $2B$. We thus have the desired atomic decomposition of $g$, and 
the norm estimate (\ref{ne}) also follows.
\end{proof}

\begin{theorem}\label{1infty}
For every $r$ in $(1,\infty)$, the spaces  $H^{1,r}
(\gamma)$ and $H^{1,\infty}(\gamma)$ coincide, with equivalent norms.
\end{theorem}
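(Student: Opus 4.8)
The plan is to prove the two inclusions separately, the nontrivial one being $H^{1,r}(\ga)\subseteq H^{1,\infty}(\ga)$. The inclusion $H^{1,\infty}(\ga)\subseteq H^{1,r}(\ga)$, with $\norm{f}{H^{1,r}(\ga)}\le\norm{f}{H^{1,\infty}(\ga)}$, is immediate: if $a$ is a Gaussian $(1,\infty)$-atom supported in an admissible ball $B$, then Hölder's inequality gives $\norm{a}{r}\le\norm{a}{\infty}\,\ga(B)^{1/r}\le\ga(B)^{1/r-1}$, so $a$ is also a Gaussian $(1,r)$-atom, and the constant atom is common to both classes.

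For the reverse inclusion it suffices to decompose a single non-constant Gaussian $(1,r)$-atom into Gaussian $(1,\infty)$-atoms with an absolute bound on the sum of the moduli of the coefficients. To keep the output atoms at scale $1$, I would first use the scale-invariance of the Hardy space for finite exponents (from \cite{MM}) to represent $f\in H^{1,r}(\ga)$ by $(1,r)$-atoms supported in balls $B\in\cB_{1/2}$; the point of this choice is that then $2B\in\cB_1$ and every ball contained in $2B$ is admissible at scale $1$. Given such an atom $a$ on $B\in\cB_{1/2}$, the device is to pass to the Lebesgue setting by putting $g=a\,\ga_0$. Then $g$ is supported in $B$ and $\int g\wrt\lambda=\int a\wrt\ga=0$, so $g$ is a constant multiple of a classical Lebesgue $(1,r)$-atom and hence lies in $H^1(\lambda)$ with $\norm{g}{H^1(\lambda)}\le C\,\lambda(B)^{1-1/r}\norm{g}{L^r(\lambda)}$. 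Since $\ga_0$ is comparable to $\ga_0(c_B)$ on the admissible ball $B$, the comparability \pref{lgeq} of the two measures yields $\lambda(B)^{1-1/r}\norm{g}{L^r(\lambda)}\approx\ga(B)^{1-1/r}\norm{a}{r}$, which is at most $1$ by the size condition in \pref{f: prop atom}. Thus $\norm{g}{H^1(\lambda)}\le C$.

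Applying Lemma \ref{cptsupp} to $g$ produces a decomposition $g=\sum_k\mu_k b_k$, where the $b_k$ are Lebesgue $(1,\infty)$-atoms associated to balls $B_k\subseteq 2B$ and $\sum_k\mod{\mu_k}\le C\norm{g}{H^1(\lambda)}\le C$. Dividing through by $\ga_0$ gives $a=\sum_k\mu_k\,(b_k/\ga_0)$. Each $b_k/\ga_0$ is supported in $B_k$ and has vanishing Gaussian integral, since $\int(b_k/\ga_0)\wrt\ga=\int b_k\wrt\lambda=0$. Because $B_k\subseteq 2B\in\cB_1$, the ball $B_k$ is admissible at scale $1$; on it $\ga_0\approx\ga_0(c_{B_k})$ and $\ga(B_k)\approx\ga_0(c_{B_k})\,\lambda(B_k)$ by \pref{lgeq}, whence $\norm{b_k/\ga_0}{\infty}\le C\,\lambda(B_k)^{-1}/\ga_0(c_{B_k})\le C\,\ga(B_k)^{-1}$. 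Consequently each $b_k/\ga_0$ is a bounded multiple of a Gaussian $(1,\infty)$-atom, and $a$ is exhibited as a sum of such atoms with coefficient sum at most an absolute constant.

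Assembling these facts, any atomic decomposition $f=\sum_j\lambda_j a_j$ into Gaussian $(1,r)$-atoms at scale $1/2$ converts into a decomposition into Gaussian $(1,\infty)$-atoms at scale $1$ with total coefficient sum $\le C\sum_j\mod{\lambda_j}$; taking the infimum and again invoking scale-invariance for the finite exponent gives $\norm{f}{H^{1,\infty}(\ga)}\le C\norm{f}{H^{1,r}(\ga)}$. I expect the main obstacle to be the bookkeeping attached to the multiplication and division by $\ga_0$: one must check that this map carries Gaussian atoms to multiples of Lebesgue atoms and back while preserving the support, cancellation and size normalisations. This is precisely where the measure comparability \pref{lgeq} on admissible balls, and the deliberate passage to scale $1/2$ so that the enlarged balls $2B$ remain admissible at scale $1$, do the essential work.
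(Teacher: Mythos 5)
Your proposal is correct and follows essentially the same route as the paper's proof: both pass to the Lebesgue setting by multiplying the atom by $\gamma_0$, invoke Lemma \ref{cptsupp} to decompose the resulting function into Lebesgue $(1,\infty)$-atoms on balls inside $2B$, and divide back by $\gamma_0$, using the measure comparability \pref{lgeq} at each step. The only difference is bookkeeping of scales: the paper starts from $(1,r)$-atoms at scale $1$ and ends with Gaussian atoms on balls admissible at scale $2$ (tacitly relying on scale invariance), whereas you shift the scale at the input, using the scale invariance of $H^{1,r}(\gamma)$ from \cite{MM} to start from atoms in $\cB_{1/2}$ so that the output atoms are admissible at scale $1$ --- a slightly tidier resolution of the same point.
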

\begin{proof}
In this proof, the constants $C$ may depend on $r$ and $n$.
Since any Gaussian $(1,\infty)$-atom  is also a Gaussian $(1,r)$-atom, 
 $H^{1,\infty}(\gamma)$ is a subspace of $H^{1,r}(\gamma)$ and 
$\norm{f}{H^{1,r}(\gamma)}\le\norm{f}{H^{1,\infty}(\gamma)}$.
 Conversely, suppose that $a$ is a Gaussian $(1,r)$-atom 
associated to the  ball $B \in \mathcal{B}_1$. Then the function 
$a\,\gamma_0$ is a multiple of a Lebesgue $(1,r)$-atom. Indeed, 
$\int a\gamma_0\wrt\lambda=\int a\wrt\gamma=0$
and, by the equivalence of the Gauss and Lebesgue measures on 
admissible balls, 
$$
\norm{a\gamma_0}{L^r(\lambda)}\le C\,\lambda(B)^{1/r-1}.
$$
Hence, $a_0\gamma$ is in $H^1(\lambda)$ with norm at most $C$. 
By Lemma \ref{cptsupp}, it has a decomposition
$$
a\,\gamma_0=\sum_j\lambda_j \alpha_j,
$$
where each  $\alpha_j$ is a Lebesgue $(1,\infty)$-atom associated to a 
ball $B_j$ contained in $2B$. Moreover  
$$
\sum_j\mod{\lambda_j}\le C,
$$
and each $B_j$ is admissible at scale $2$.
Define $a_j=\alpha_j\,\gamma_0^{-1}$. Then $\int a_j\wrt\gamma=0$, 
and by the equivalence of the Gauss and Lebesgue measures on 
$B_j$ 
\begin{align*}
\norm{a_j}{\infty}\le C \gamma(B_j)^{-1}.
\end{align*}
Thus the $a_j$ are  multiples of Gaussian $(1,\infty)$-atoms. Since $a=\sum_j\lambda_j a_j$, 
we conclude that $a \in H^{1,\infty}(\gamma)$ and
\begin{equation}\label{nah1}
\norm{a}{H^{1,\infty}(\gamma)}\le\,C \sum_j\mod{\lambda_j}\le C.
\end{equation}
\end{proof}

\section{The  characterisation of $H^1(\gamma)$ in $\BR$}\label
{maxchar1}
In this section, we shall prove that  $f\in \huat
{\gamma}$ implies $\mlo f\in L^1(\gamma)$ and that, in dimension one,  
functions in $H^1(\gamma)$ can be characterised by the two conditions 
$\mlo{f}\in L^1(\gamma)$ and  $E(f)<\infty$. We start with a simple but useful lemma
dealing with the support of the local grand maximal function. 
\par

\begin{lemma}\label{supp}
  If $f \in L^1(\ga)$ is supported in the admissible ball $B$,
then $\supp \mlo f$ is contained in the ball   $B' = B(c_B, R)$, 
where $R = 4 \min(1, 1/|c_B|).$
\end{lemma}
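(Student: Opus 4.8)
The plan is to use the compact support of the kernels to localise the convolution, reducing the statement to an elementary geometric inequality, and then to control the scale function $\min(1,1/|x|)$. First, since each $\phi\in\Phi$ is supported in $B(0,1)$, the dilate $\phi_t$ is supported in $B(0,t)$; hence $\phi_t*f(x)$ vanishes unless the ball $B(x,t)$ meets $\supp f\subseteq B$, that is, unless $|x-c_B|<t+r_B$. (This holds whether the convolution is taken against Lebesgue or Gauss measure, since only $\supp\phi_t$ enters.) In the definition of $\mlo f$ the dilation parameter satisfies $0<t<\min(1,1/|x|)$, and admissibility of $B$ gives $r_B\le\min(1,1/|c_B|)$. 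Writing $m_x=\min(1,1/|x|)$ and $m=\min(1,1/|c_B|)$, it follows that $\mlo f(x)\neq0$ can occur only if
\[
|x-c_B|<m_x+m .
\]

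It therefore suffices to show that this inequality forces $|x-c_B|<4m=R$; equivalently, that $m_x\le 3m$, since then $|x-c_B|<m_x+m\le 4m$. I would prove the claim $m_x\le 3m$ by contradiction, assuming $m_x>3m$. Because $m_x\le 1$, this already forces $m<1/3$, hence $|c_B|>3$ and $m=1/|c_B|$. One then splits according to the value of $m_x$. If $m_x=1$, then $|x|\le 1$, so $|x-c_B|\ge|c_B|-1$, whereas $m_x+m=1+1/|c_B|$; since $|c_B|>3$ the former strictly exceeds the latter, contradicting the displayed inequality. If instead $m_x=1/|x|<1$, then $|x|>1$, and $m_x>3m$ gives $|x|<|c_B|/3$, so that $|x-c_B|\ge|c_B|-|x|>\tfrac{2}{3}|c_B|>2$, while $m_x+m<1+\tfrac13<2$; again a contradiction. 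Hence $m_x\le 3m$ and therefore $|x-c_B|<4m$.

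This shows that the set on which $\mlo f$ is nonzero is contained in the open ball $B(c_B,R)$, and passing to closures gives $\supp\mlo f\subseteq B(c_B,R)=B'$. The only genuine obstacle is the inequality $m_x\le 3m$: it records the fact that the scale function $x\mapsto\min(1,1/|x|)$ is slowly varying, so that points far from $c_B$ lie beyond the reach of every admissible dilate $\phi_t$, while nearby points automatically have comparable scale. I expect the constant $4$ to be comfortably non-sharp, which is exactly why the crude triangle-inequality estimates above already close the argument.
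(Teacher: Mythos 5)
Your proof is correct, and its skeleton coincides with the paper's: both arguments first observe that $\phi_t*f(x)\ne 0$ with $t<\min(1,1/|x|)$ forces $B(x,t)$ to meet $B$, hence $|x-c_B|\le\min(1,1/|x|)+\min(1,1/|c_B|)$, and both then reduce the lemma to the scale-comparison inequality $\min(1,1/|x|)\le 3\min(1,1/|c_B|)$. Where you genuinely diverge is in the verification of that inequality. The paper passes to $\max(1,c)-\min(1,1/c)\le\max(1,\rho)+\min(1,1/\rho)$ (with $c=|c_B|$, $\rho=|x|$) and then exploits the monotonicity of the map $t\mapsto t^{-1}-t$ and of its inverse, evaluated at $\min(1,1/c)$ and $\min(1,1/\rho)/3$, to turn the claim into a trivially true inequality; this is compact but somewhat opaque. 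You instead argue by contradiction: the assumption $\min(1,1/|x|)>3\min(1,1/|c_B|)$ forces $|c_B|>3$, and then in each of the two cases $|x|\le 1$ and $|x|>1$ the triangle inequality makes $|x-c_B|$ strictly larger than the permitted $\min(1,1/|x|)+\min(1,1/|c_B|)$. Your route is more elementary and easier to check, at the cost of a short case analysis; it also handles the topological point slightly more carefully than the paper, since you bound the set where $\mlo f\ne 0$ and only then pass to closures, rather than applying the intersection argument directly to points of $\supp\mlo f$.
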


\begin{proof}
  Let $x \in \supp \mlo f$.   We write $\rho = |x|$ and $c = |c_B|,$
so that $B \subset B(c_B, \min(1,1/c))$. The balls $B$ and 
$B(x,  \min(1,1/\rho))$ must intersect, and so
\begin{equation}
  \label{eq:dist}
  |x-c_B| \le \min(1,1/c) + \min(1,1/\rho).
\end{equation}
To prove the lemma, it is enough to show that 
\begin{equation}
  \label{eq:3}
  \min(1,1/\rho) \le 3 \min(1,1/c),
\end{equation}
since it then follows that $x \in B'$.
Now $c - \rho \le |x-c_B|$, so that (\ref{eq:dist}) implies 
\[
c - \min(1,1/c) \le \rho + \min(1,1/\rho).
\]
Considering the cases $c \le 1$ and $c > 1$, we conclude from this that
\begin{equation}
  \label{eq:le}
  \max(1,c)  - \min(1,1/c) \le \max(1,\rho)   + \min(1,1/\rho).
\end{equation}
The function $t \mapsto t^{-1} - t, \;\; t>0$, and its inverse
are clearly decreasing. Considering the values of this function at
$t = \min(1,1/c)$ and $\min(1,1/\rho)/3$, we see that (\ref{eq:3}) 
is equivalent to 
\[
\max(1,c)  - \min(1,1/c) \le 3\max(1,\rho)   - \frac13 \min(1,1/\rho).
\]
Because of (\ref{eq:le}), this inequality follows if 
\[
\max(1,\rho)   + \min(1,1/\rho) \le 3\max(1,\rho)   - \frac13 \min(1,1/\rho)
\]
or equivalently $\frac43 \min(1,1/\rho) \le 2\max(1,\rho),$ which is
trivially true. We have proved (\ref{eq:3}) and the lemma.
\end{proof}

\begin{lemma}\label{necn} If $f$ is  in $\huat{\gamma}$, then $
\mlo f\in L^1(\gamma)$ and
\[
 \norm{\mlo f}{1}\le C\norm{ f}{H^1(\ga)}.
\]
\end{lemma}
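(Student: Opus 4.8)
The plan is to reduce the estimate to the case of a single Gaussian atom and then to compare the local grand maximal function of such an atom with the classical grand maximal function of an associated Lebesgue atom.

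First I would fix a decomposition $f = \sum_j \lambda_j a_j$ into Gaussian $(1,\infty)$-atoms with $\sum_j |\lambda_j|$ as close as we wish to $\norm{f}{H^1(\ga)}$. Since each atom lies in $L^\infty$ with bounded support, $f \in L^1_{\loc}(\lambda)$, so for fixed $x$, $\phi \in \Phi$ and $0 < t < \min(1,1/|x|)$ the convolution $\phi_t * f(x)$ is an absolutely convergent Lebesgue integral equal to $\sum_j \lambda_j\, \phi_t * a_j(x)$. Taking absolute values and suprema yields the sublinearity bound $\mlo f \le \sum_j |\lambda_j|\, \mlo a_j$, so after integrating and passing to the infimum it suffices to prove $\norm{\mlo a}{1} \le C$ uniformly over all Gaussian atoms $a$. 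For the constant atom $a \equiv 1$ one has $\bigl|\phi_t * 1(x)\bigr| = \bigl|\int \phi\wrt\lambda\bigr| \le \lambda(B(0,1))$, so $\mlo 1$ is bounded and the estimate is trivial; it remains to treat an atom $a$ associated to an admissible ball $B$.

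For such an $a$, Lemma \ref{supp} confines $\supp \mlo a$ to $B' = B(c_B, R)$ with $R = 4\min(1,1/|c_B|)$, and one checks $B' \in \cB_4$. On $B'$ the Gauss and Lebesgue measures are equivalent through \eqref{lgeq}, so $\wrt \ga \le C\, \gamma_0(c_B)\wrt\lambda$ there. Setting $g = a\gamma_0$, the argument in the proof of Theorem \ref{1infty} shows that $g$ is a bounded multiple of a Lebesgue $(1,\infty)$-atom supported in $B$, whence $\norm{g}{H^1(\lambda)} \le C$ and, by the classical maximal characterisation of $H^1(\lambda)$, $\norm{\cM g}{L^1(\lambda)} \le C$. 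Everything then reduces to the pointwise comparison
\[
\mlo a(x) \le C\, \gamma_0(c_B)^{-1}\, \cM g(x), \qquad x \in B',
\]
since integrating it against $\wrt\ga$ and invoking the measure equivalence gives $\norm{\mlo a}{1} \le C\,\gamma_0(c_B)^{-1}\cdot \gamma_0(c_B)\,\norm{\cM g}{L^1(\lambda)} \le C$.

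To establish this comparison I would freeze the Gaussian weight. Writing $a = g\,\gamma_0^{-1}$ and substituting $y = x - tz$ in $\phi_t * a(x)$ gives $\phi_t * a(x) = \gamma_0(c_B)^{-1}\, \Psi_t * g(x)$, where $\Psi(z) = \phi(z)\, \gamma_0(c_B)/\gamma_0(x-tz)$. The main obstacle is to verify that $\Psi$ is, up to a fixed constant factor, again an element of $\Phi$: it is supported in $B(0,1)$ and of class $C^1$, so one needs uniform bounds on $\Psi$ and on $\nabla_z \Psi$. The exponent $|x-tz|^2 - |c_B|^2$ governing the weight ratio is controlled once one knows that $t|c_B| \le C$ for $x \in B'$ and $0 < t < \min(1,1/|x|)$; this inequality is the crux, and I expect to prove it by distinguishing the cases $|x| \ge |c_B|/2$ (where $t \le 1/|x|$ forces $t|c_B|\le 2$) and $|x| < |c_B|/2$ (where $x \in B'$ forces $|c_B|$ to be bounded, and $t<1$). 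Combined with $t|x| \le 1$, the same bounds also control $|\nabla_z \Psi|$ through $\nabla_z\bigl(|x-tz|^2\bigr) = -2t(x-tz)$. Hence $\Psi/C \in \Phi$, so $|\Psi_t * g(x)| \le C\,\cM g(x)$, and taking the supremum over $\phi$ and admissible $t$ gives the desired pointwise estimate and completes the proof.
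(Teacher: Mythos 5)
Your proof is correct, but it takes a genuinely different route from the paper's. After the same reduction to a single atom and the same use of Lemma \ref{supp}, the paper proceeds by direct estimation: on $2B$ it uses $\mlo a \le C/\ga(B)$, and on $B'\setminus 2B$ it subtracts $\phi_t(x-c_B)$ to exploit the smoothness of $\phi$; since a Gaussian atom has vanishing mean against $\ga$ but \emph{not} against $\lambda$, the paper must also prove the near-cancellation bound $\bigl|\int a\wrt\lambda\bigr| \le C\,r_B(1+\mod{c_B})\,\gamma_0(c_B)^{-1}$ for the leftover term, and the final integration produces a logarithmic factor compensated by $r_B(1+\mod{c_B})$. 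You bypass all of this by transferring the problem to the classical theory: $a\gamma_0$ is a bounded multiple of a Lebesgue $(1,\infty)$-atom (the observation underlying Theorem \ref{1infty}), hence $\norm{\cM(a\gamma_0)}{L^1(\lambda)}\le C$ by the classical maximal characterisation, and your frozen-weight comparison $\mlo a \le C\,\gamma_0(c_B)^{-1}\cM(a\gamma_0)$ on $B'$ is exactly the device of the paper's Lemma \ref{Lemma1}, applied in the reverse direction (dividing by $\gamma_0$ rather than multiplying). Your key bounds check out: for $x\in B'$, $0<t<\min(1,1/\mod{x})$ and $\mod{z}\le 1$ the exponent $\mod{x-tz}^2-\mod{c_B}^2$ is bounded above --- your case distinction giving $t\mod{c_B}\le C$ is sound, since $\mod{x}<\mod{c_B}/2$ together with $\mod{x-c_B}\le 4\min(1,1/\mod{c_B})$ forces $\mod{c_B}\le C$ --- and $\mod{\nabla_z\Psi}\le C$ follows because $2t\mod{x-tz}\le 2\bigl(t\mod{c_B}+4t+t^2\bigr)$ is bounded; since $\mlo a$ vanishes off $B'$, integrating against $\wrt\ga$ and using (\ref{lgeq}) on $B'\in\cB_4$ finishes. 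As for what each approach buys: the paper's computation is elementary and self-contained at that point of the exposition, whereas yours is computationally lighter, reuses machinery the paper develops anyway (Theorem \ref{1infty}, Lemma \ref{supp}, and the classical maximal characterisation, which the paper invokes later in Lemma \ref{Lemma2}), and makes transparent why the non-vanishing Lebesgue mean of a Gaussian atom causes no trouble: once the weight is folded into $a\gamma_0$, the cancellation is exact.
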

\begin{proof}
We shall prove that for any Gaussian
atom  $a$ 
\begin{equation}\label{MaEa}
\norm{\mlo a}{1}\le C, 
\end{equation}
from which the lemma follows.
\par
Since (\ref{MaEa}) is obvious if $a$ is the constant function $1$,
we assume that $a$ is  associated to an admissible ball $B$. 
By the preceding lemma, $\supp \mlo f$ is contained in the ball denoted $B'$.
\par
The integral of $\mlo a$ over $2B$ with respect 
to $\ga$ is no larger
than $C$, since 
$\mathcal{M}_{\mathrm{loc}}a \le C \sup |a| \le C/\gamma(B)$. To estimate
$\mathcal{M}_{\mathrm{loc}}a$ at a point $x$ in the remaining set
$B'\setminus 2B$, we take $\phi \in \Phi$ and $0 < t < \min(1,1/\mod{x})$
and estimate $a*\phi_t(x)$.
We can assume that $t > d(x, B)$ so that $t > |x-c_B|/2$, since 
otherwise  $\phi_t*a(x)$ will vanish. Write
\begin{equation}
  \label{eq:max}
 \phi_t*a(x) = t^{-n} \int  \left(\phi\big(\frac{x-y}t\big) -\phi\big( \frac{x-c_B}t\big)\right)\,a(y)
\wrt y +   t^{-n} \phi\big( \frac{x-c_B}t\big) \int a(y) \wrt y.  
\end{equation}
Here the first term to the right can be estimated in a standard way by
$$
C t^{-n-1}  \int_B  |y-c_B| \, |a(y)| \wrt y \le C\  |x-c_B|^{-n-1}\  r_B\  \ga_0(c_B)^{-1}.
$$

To deal with the second term, we estimate $\int a(y)\wrt y$, knowing that
the integral of $a$ against $\ga$ vanishes. Thus
$$
\int a(y)\wrt y = \int a(y) \frac{\ga_0(c_B) - \ga_0(y)}{\ga_0(c_B)}\wrt y.
$$
The fraction appearing here is
\begin{equation}
  \label{eq:fraction}
e^{|c_B|^2}\left(e^{-|c_B|^2} - e^{-|y|^2}\right) 
= 1 - e^{(c_B-y)\cdot(c_B+y)},
\end{equation}
and the last exponent stays bounded for $y \in B$. Thus  
the modulus of the right-hand side of (\ref{eq:fraction}) is
 at most $C|c_B-y| |c_B+y| \le C r_B (1+|c_B|).$ 
Since $\int |a|\,d\ga \le 1$, this implies
\[
|\int a(y)\wrt y| \le  C r_B (1+|c_B|) \ga_0(c_B)^{-1}.
\]
 For the
last term in (\ref{eq:max}), we thus get the bound 
$C |x-c_B|^{-n} r_B (1+|c_B|) \ga_0(c_B)^{-1}$.

Putting things together, we conclude that for $x \in B'\setminus 2B$
$$
\mathcal{M}_{\mathrm{loc}}a(x) \le  C |x-c_B|^{-n-1} r_B \ga_0(c_B)^{-1}
+  C |x-c_B|^{-n} r_B (1+|c_B|) \ga_0(c_B)^{-1}.
$$
An integration   with respect to $\wrt \ga$, or 
equivalently $\ga_0 \wrt \lambda$,
then leads to
$$
\int_{B'\setminus 2B} \mathcal{M}_{\mathrm{loc}}a(x)\wrt \ga(x) 
\le C + C r_B (1+|c_B|) \log\frac{\min(1,1/|c_B|)}{r_B} \le C,
$$ 
and  (\ref{MaEa}) is proved.
\end{proof} 

\begin{theorem}\label{char} Let $n=1$, and 
suppose that $f$ is a function in $L^1(\gamma)$. Then $f$ is in
$H^1(\gamma)$ if and only if $\mlo f\in L^1(\gamma)$ 
and $E(f)<\infty$. The norms $\norm{f}{H^1(\gamma)}$ and 
$\norm{\mlo{f}}{L^1(\gamma)}+E(f)$ are equivalent.
\end{theorem}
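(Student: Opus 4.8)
The plan is to prove the two implications separately, the necessity being short and the sufficiency carrying the real weight. For necessity, Lemma \ref{necn} already gives that $f\in\huat\gamma$ implies $\mlo f\in L^1(\gamma)$ with the correct bound, so only $E(f)\le C\norm{f}{H^1(\gamma)}$ remains. Since $f\mapsto\int_x^\infty f\wrt\gamma$ is linear and $E$ is an integral of absolute values of such functionals, $E$ is subadditive; hence by the atomic decomposition (and a routine limiting argument) it suffices to bound $E(a)\le C$ uniformly over Gaussian atoms $a$. For the constant atom this is the explicit estimate $x\,\gamma((x,\infty))\le C\e^{-x^2}$. For an atom associated to an admissible ball $B=(c_B-r_B,c_B+r_B)$ with, say, $c_B>0$, the cancellation $\int a\wrt\gamma=0$ forces $\int_x^\infty a\wrt\gamma$ to vanish for $x\notin B$ and to be bounded by $\int\mod{a}\wrt\gamma\le1$ for $x\in B$; integrating $x$ over $B$ and using the admissibility bound $r_B(1+\mod{c_B})\le C$ gives the estimate, and the $\int_{-\infty}^{-x}a\wrt\gamma$ term is nonzero only in the small range allowed by the support.

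For the converse I must show $\norm{f}{H^1(\gamma)}\le C(\norm{\mlo f}{1}+E(f))$ whenever $\mlo f\in L^1(\gamma)$ and $E(f)<\infty$. Since $\mod{f}\le C\,\mlo f$ a.e.\ (take an approximate identity in $\Phi$ and let $t\to0$), we have $\norm{f}{1}\le C\norm{\mlo f}{1}$, so $\mod{\int f\wrt\gamma}$ is controlled and the constant-atom part is harmless. I then tile $[0,\infty)$, and symmetrically $(-\infty,0]$, by consecutive intervals $I_j=[t_j,t_{j+1}]$ with $\mod{I_j}\sim\min(1,1/t_j)$, so that each $I_j$ is an admissible ball and $t_j\mod{I_j}\sim1$ once $t_j\gtrsim1$; the bounded region near the origin accounts for only finitely many intervals. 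Because the range $0<t<\min(1,1/\mod{x})$ in the definition of $\mlo f$ sees $f$ only on a neighbourhood of size $\sim\mod{I_j}$, on a slightly enlarged interval $\mlo f$ is comparable to the ordinary grand maximal function of the localised piece of $f$; the classical local ($h^1$) theory, via Lemma \ref{cptsupp} and the equivalence \eqref{lgeq} of the Gauss and Lebesgue measures on admissible balls, then decomposes $f$ as a sum of genuine Gaussian atoms with $\sum\mod\lambda\le C\norm{\mlo f}{1}$, plus a residual means term $\sum_j m_j\,\chi_{I_j}/\gamma(I_j)$ with $m_j=\int_{I_j}f\wrt\gamma$.

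It remains to resolve the means term into atoms, and this is the crux. Setting $S_j=\sum_{i\ge j}m_i=\int_{t_j}^\infty f\wrt\gamma$, Abel summation rewrites $\sum_j m_j\,\chi_{I_j}/\gamma(I_j)$ as boundary bumps near the origin (absorbed into the constant atom and bounded by $\norm{\mlo f}{1}$) plus $\sum_j S_j\bigl(\chi_{I_j}/\gamma(I_j)-\chi_{I_{j-1}}/\gamma(I_{j-1})\bigr)$. Each difference has vanishing $\gamma$-integral and, since adjacent intervals have comparable Gauss measure and $I_{j-1}\cup I_j$ lies in an admissible ball at a fixed scale, is a bounded multiple of a Gaussian atom by the scale-invariance noted in the Introduction. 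Thus the means term has $H^1$-norm $\lesssim\norm{\mlo f}{1}+\sum_j\mod{S_j}$, and everything reduces to $\sum_j\mod{S_j}\le C(E(f)+\norm{\mlo f}{1})$. For $x\in I_j$ one has $\mod{S_j}\le\mod{\int_x^\infty f\wrt\gamma}+\int_{I_j}\mod{f}\wrt\gamma$; integrating against $x\wrt\lambda(x)$ over $I_j$, dividing by $\int_{I_j}x\wrt\lambda\sim t_j\mod{I_j}\sim1$, and summing in $j$ yields $\sum_j\mod{S_j}\lesssim\int_0^\infty x\mod{\int_x^\infty f\wrt\gamma}\wrt\lambda+\norm{f}{1}\le E(f)+C\norm{\mlo f}{1}$, the first term being exactly the weighted tail in $E(f)$; the negative axis is handled symmetrically with the second term of $E(f)$.

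The main obstacle is this reassembly step and its interface with the local decomposition: one must check that the telescoped differences really are atoms at a controlled scale (so the scale-invariance of $\huat\gamma$ is essential), that the means produced by the local $h^1$ decomposition are genuinely $\int_{I_j}f\wrt\gamma$ so their tails $S_j$ coincide with $\int_{t_j}^\infty f\wrt\gamma$, and that passing from $\mlo f$ to an honest grand maximal function on each enlarged admissible interval loses nothing beyond $L^1$-controllable tails of bounded overlap. By contrast, the local pointwise estimates and the necessity direction are routine. Conceptually, it is the identity $t_j\mod{I_j}\sim1$ that converts the geometric weight $x$ in $E(f)$ into the plain counting $\sum_j\mod{S_j}$, which is precisely why $E(f)$ is the right global condition.
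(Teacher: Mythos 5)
Your overall architecture --- localise $f$ along admissible intervals, use the classical local Hardy theory to decompose the mean-zero pieces with total coefficient mass $\lesssim\norm{\mlo f}{1}$, then Abel-sum the means so that the telescoped differences are fixed-scale Gaussian atoms whose coefficients are the tails controlled by $E(f)$ --- is exactly the paper's, and both your necessity argument and your final estimate $\sum_j\mod{S_j}\lesssim E(f)+\norm{f}{1}$ are sound. The genuine gap is the localisation step: you cut $f$ with the \emph{sharp} indicators $\chi_{I_j}$, and the claim that on a slightly enlarged interval ``$\mlo f$ is comparable to the ordinary grand maximal function of the localised piece'' is false for sharp truncations, because truncation destroys cancellation of $f$ across the cut points. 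Concretely, let $b=t_{j+1}$ be a cut point (say $b\in(1,2)$) and take the dipole $f=h\bigl(\chi_{(b-\delta,b)}-\chi_{(b,b+\delta)}\bigr)$ with $\delta$ small. Since the two charges cancel, for $t\gtrsim s:=\mod{x-b}$ the $C^1$ bound on $\phi$ gives $\mod{\phi_t*f(x)}\lesssim h\delta^2/t^2$, whence $\norm{\mlo f}{L^1(\gamma)}\lesssim h\delta\gamma_0(b)=:\epsilon$. But the truncated, mean-corrected piece $g_j\gamma_0=(f-c_j)\chi_{I_j}\gamma_0$ is a single tall bump of width $\delta$ whose compensating charge is smeared over all of $I_j$; testing against $\phi_t$ with $t\sim s$ shows $\cM(g_j\gamma_0)(x)\gtrsim h\gamma_0(b)\,\delta/s$ for $\delta\lesssim s\lesssim\mod{I_j}$, so $\norm{g_j\gamma_0}{H^1(\lambda)}\gtrsim\epsilon\log\bigl(\mod{I_j}/\delta\bigr)$. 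Thus no uniform constant exists in your key localisation inequality, and superposing such dipoles at the cut points with $\sum_j\epsilon_j<\infty$ but $\sum_j\epsilon_j\log(1/\delta_j)=\infty$ (and $\delta_j$ small enough that $E(f)<\infty$) yields a function which \emph{is} in $\huat{\gamma}$ by the theorem, yet for which your decomposition produces non-summable atomic coefficients. The item you flagged at the end as ``to be checked'' is precisely the step that fails.

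The repair is the actual content of the paper's proof: one must localise with a \emph{smooth} partition of unity $\set{\eta_j}$, $\supp\eta_j\subset B_j$, $\mod{\nabla\eta_j}\le C/r_{B_j}$. Smooth multiplication preserves the cancellations that sharp truncation destroys: this is Lemma \ref{Lemma1}, where $\phi_t*\bigl((f-b_j)\eta_j\gamma_0\bigr)(x)=\gamma_0(c_{B_j})\,\tilde\phi_t*(f-b_j)(x)$ with $\tilde\phi\in C\Phi$, the gradient bound on $\eta_j$ being exactly what keeps $\tilde\phi$ admissible; Lemma \ref{Lemma2} then gives $\norm{(f-b_j)\eta_j\gamma_0}{H^1(\lambda)}\le C\int_{4B_j}\mlo f\wrt\gamma$, after which Lemma \ref{cptsupp} applies as in your outline. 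The price is that the means are now $b_j=\int f\eta_j\wrt\gamma\big/\int\eta_j\wrt\gamma$ rather than $\int_{I_j}f\wrt\gamma$, so your sharp tails $S_j$ must be replaced by the smoothed tails $\int f\mu_k\wrt\gamma$ with $\mu_k=\sum_{j\ge k}\eta_j$; these are estimated by $C\bigl(\norm{f}{1}+E(f)\bigr)$ via $\int f\mu_k\wrt\gamma=\int\mu_k'(y)\int_y^\infty f\wrt\gamma\wrt\lambda(y)$ and $\mod{\mu_k'}\le C(1+\mod{c_{I_k}})$ --- the same ``$t_j\mod{I_j}\sim1$'' mechanism you correctly identified as the reason $E(f)$ is the right global condition. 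With that substitution, your outline becomes the paper's proof.
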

\begin{proof}
Suppose that $f\in\huat{\gamma}$. Then $\mlo f\in  L^1(\gamma)$ by 
Lemma \ref{necn}. To prove the necessity of the condition $E(f)<\infty$,
it suffices to show that  $E(a)<C$ 
for all Gaussian atoms $a$. This is obvious for the exceptional atom $1$.
If $a$ is associated to a ball $B \in \cB_1$, it follows from the 
inequality
$$
\mod{\int_{-\infty}^{-x} a\wrt\gamma}+\mod{\int_{x}^{\infty} a\wrt
\gamma}\le \ \1_B(x).
$$

Conversely, assume that $f$ is a function in $L^1(\gamma)$ such that 
$\mlo f\in L^1(\gamma)$ and $E(f)<\infty$. We shall prove that $f\in 
H^1(\gamma)$, by constructing a Gaussian atomic decomposition $f=
\sum_j\lambda_j\,a_j$ such that $\sum_j \mod{\lambda_j}\le C \big
(\norm{\mlo f}{1}+E(f)\big)$. 
\par
Most of the following argument, up to the decomposition (\ref{aat}), works also in 
the $n$-dimensional setting. Since we shall need it in the next section, 
we carry out that part in $\BR^n$.\par
By subtracting a multiple of the exceptional atom $1$, we may without
loss of generality
assume that 
\begin{equation}\label{int0}
\int f\wrt\gamma=0.
\end{equation}
Let $\set{B_j}$ be a covering of $\BR^n$ by maximal admissible balls. We 
can choose this covering in such way that  the family $\set{\frac12 B_j}$ 
is disjoint and $\set{4 B_j}$ has bounded overlap  
\cite[Lemma 2.4]{GMST1}. Fix a smooth nonnegative partition of unity 
$\set{\eta_j}$ in $\BR^n$ such that 
$\supp \eta_j \subset B_j$ and $\eta_j=1$ on $\tfrac{1}{2}B_j$ and verifying $
\mod{\nabla\eta_j}\le C/r_{B_j}$. Thus $f=\sum_j f
\eta_j$. We now need the following lemma.
\par
\begin{lemma}\label{Lemma1}
For $g$ in $L^1_{\rm loc}(\gamma)$ and $x \in\BR^n$ one has
\begin{equation}\label{2maxf}
\mlo (g\eta_j\gamma_0)(x)\le C\ \gamma_0(c_{B_j})\ \mlo g(x)\  \1_{4 
B_j}
(x)\qquad \forall j.
\end{equation}
\end{lemma} 
\begin{proof}
Since the support of $\eta_j$ is contained in $B_j$, the support of 
$\mlo (g\eta_j\gamma_0)$ is contained in the ball $4B_j$, because of
Lemma  \ref{supp}. Moreover, for $\phi\in\Phi$ and
$x \in 4B_j$
$$
\phi_t*(g\eta_j\gamma_0)(x)=\gamma_0(c_{B_j})\ \tilde{\phi}_t*g(x),
$$
where $\tilde{\phi}(z)=\phi(z)\eta_j(x-tz)\gamma_0(x-tz)/\gamma_0(c_
{B_j})$. Thus, 
to prove (\ref{2maxf}) it suffices to show that there exists a constant 
$C$ 
such that $\tilde{\phi}\in C\Phi$ for  $x\in 4B_j$ and 
$0<t < \min(1, 1/\mod{x})$. The support of $\tilde{\phi}$ is 
contained 
in $B(0,1)$ and 
$$
\mod{\tilde{\phi}(z)}\le \frac{\gamma_0(x-tz)}{\gamma_0(c_{B_j})}\le C,
$$
because for $|z| \le 1$
\begin{align*}
\mod{x-tz-c_{B_j}}\le \mod{x-c_{B_j}}+\mod{tz}
 \le C\,{\min(1, 1/\mod{c_{B_j}})}.
\end{align*}
Similarly $\mod{\nabla\tilde{\phi}(z)}\le C$, because the gradients $
\nabla_z\eta_j
(x-tz)$ and $\nabla_z\gamma_0(x-tz)/\gamma_0(c_{B_j})$  give the 
factors $t(1+\mod{c_{B_j}})$ 
and $t\mod{x-tz}\gamma_0(x-tz)/\gamma_0(c_{B_j})$, respectively, 
both of which 
are bounded. This concludes the proof of Lemma \ref{Lemma1}.
\end{proof}
Continuing the proof of Theorem \ref{char}, we define $b_j\in\BC$ for each $j \in \BN$ by
\begin{equation}\label{mz}
\int_{-\infty}^\infty (f-b_j)\eta_j \wrt \gamma =0. 
\end{equation}
Note that since $\eta_j=1$ on $\frac12 B_j$,
\begin{equation}\label{1st}
\mod{b_j} = \left|\frac{\int{f\eta_j}\wrt\gamma}
{\int \eta_j \wrt \ga}\right|\le C
\frac{1}{\gamma(B_j)}{\int_{B_j}\mod{f}\wrt\gamma}.
\end{equation}
We now apply Lemma \ref{Lemma1} with $g=f - b_j$ and use the
subadditivity of $\mlo$ combined with (\ref{1st}), to get
\begin{align}\label{3d}
\int \mlo\big((f-b_j)\eta_j\gamma_0\big)\wrt\lambda&\le\,C\, 
\int_{4B_j} \mlo f \, \gamma_0(c_{B_j})\wrt\lambda 
\,+\,C\, \frac
{\ga(4B_j)}{\ga(B_j)}\int_{B_j} \mod{f}\wrt \gamma \nonumber \\
&\le\,C\, \int_
{4B_j}\mlo f \wrt\gamma.
\end{align}
\par
\begin{lemma}\label{Lemma2}
The function $(f-b_j)\eta_j\gamma_0$ is in $H^1(\lambda)$ 
and
\begin{equation}\label{4d}
\norm{(f-b_j)\eta_j\gamma_0}{H^1(\lambda)}\le\,\, \int_{4B_j}\mlo f \wrt
\gamma.
\end{equation}
\end{lemma}
\begin{proof}
By the maximal characterisation of the classical space $H^1(\lambda)
$, it suffices to show that 
\begin{equation}\label{5th}
\int\cM\left( (f-b_j)\eta_j\gamma_0\right)\wrt\lambda(x)\le \,C\, \int_
{4B_j}\mlo f \wrt\gamma.
\end{equation}
Because of (\ref{3d}), all that needs to be  verified is that
\begin{equation}\label{6th}
\int\sup_{\phi\in\Phi}\ \sup_{t\ge\min(1,1/\mod{x})} \mod{((f-b_j)
\eta_j\gamma_0\big)*\phi_t(x)}\wrt\lambda(x) \le \,C\, \int_{4B_j}\mlo f \wrt
\gamma.
\end{equation} 
To prove (\ref{6th}), we split 
the integral in the left-hand side into the sum
$$
\int_{4B_j}\cdots \wrt\lambda(x)+\int_{(4B_j)^c}\cdots \wrt\lambda(x).
$$
If $x\in 4B_j$, then for  $\phi\in\Phi$ and  $t\ge\min(1,1/\mod{x})$
\begin{align*}
\mod{\phi_t*\big((f-b_j)\eta_j\gamma_0\big)(x)}&\le t^{-n} \int_{B_j}
\mod{f(y)-b_j}\wrt \gamma(y)\\ 
&\le (1+\mod{x})^n\int _{B_j}\big(\mod{f(y)}+\mod{b_j}\big)\wrt\gamma
(y) \\ 
\qquad&\le C\,(1+\mod{c_{B_j}})^n\ \,\int_{B_j}
\mod{f(y)}\wrt\gamma(y),
\end{align*} 
 the last inequality because of  (\ref{1st}). Hence
\begin{align*}
\int_{4B_j} \cdots \wrt\lambda(x)\, \le \,C\, \ \mod
{4B_j}\ (1+\mod{c_{B_j}})^n \int_{B_j}\mod{f}\wrt\gamma \,
\le\, C\,\int_{B_j}\mlo f \wrt\gamma .
\end{align*}
If $x\in (4B_j)^c$, we take $\phi$ and $t$ as before and observe that
we can assume that $t> d(x, B_j)$, since otherwise the convolution
in (\ref{6th}) will vanish. In view of (\ref{mz}) and (\ref{1st}),
we then get
\begin{align*}
\mod{\phi_t*\big((f-b_j)\eta_j\gamma_0\big)(x)}&\le\int _{B_j}\mod
{\phi_t(x-y)-\phi_t(x-c_{B_j})}\ \mod{ f(y)-b_j}\,\eta_j(y) \wrt \gamma(y) 
\\ 
&\le\,C\, t^{-n-1} \int_{B_j} \mod{y-c_{B_j}}\,\mod{f(y)-b_j}\wrt \gamma
(y)\\
&\le \,C\,\frac{1}{d(x,B_j)^{n+1}} r_{B_j}\, \int_{B_j}
\mod{f(y)}\wrt\gamma(y).
\end{align*}
This implies that
$$
\int_{(4B_j)^c}\cdots \wrt\lambda(x)\le C\,\int_{B_j}\mod{f}\wrt\gamma\le C\, \int_{B_j} \mlo f \wrt 
\gamma.
$$
We have proved (\ref{6th}) and  the lemma.
\end{proof}

We can now finish the proof of Theorem \ref{char}. By Lemmata \ref{Lemma2} and \ref{cptsupp},  
each 
function $(f-b_j)\eta_j\gamma_0$ has an atomic decomposition $
\sum_k \lambda_{jk} \alpha_{jk}$ where the $\alpha_{jk}$ are Lebesgue atoms 
 with supports in $2B_j$ and
$$
\sum_k \mod{\lambda_{jk}}\le \,C\, \int_{4B_j} \mlo f \wrt\gamma.
$$
As we saw in the proof of Theorem \ref{1infty}, each  $a_{jk}=\gamma_0^{-1} \alpha_{jk}$ is a 
 multiple of a Gaussian atom, with a factor which is
 independent of $j$ and $k$. 
Thus
\begin{align}
f=\sum_j (f-b_j)\eta_j +\sum_j b_j \eta_j
= \sum_j \sum_k \lambda_{jk} \,a_{jk} +\sum_j b_j \eta_j. \label{aat} 
\end{align}
and
$$
\sum_{j,k} \mod{\lambda_{jk}}\le \sum_j \int_{4B_j}\mlo f \wrt\gamma
\le \,C\, \norm{\mlo f}{L^1(\gamma)}.
$$
To complete the proof of Theorem \ref{char}, we  need to find an 
atomic decomposition of $\sum_j b_j\eta_j$. It is here that we must 
restrict ourselves to the one-dimensional case and that the global condition $E(f)
<\infty$ plays a role. \par
Choose the intervals $I_0=(-1,1)$, $I_j=(\sqrt{j-1},
\sqrt{j+1})$ for $j\ge 1$ and $I_j=-I_{\mod{j}}$ for $j\le -1$.  The intervals $I_j$ 
have essentially the same properties as the balls $B_j$ introduced above, and we 
can use them instead of the $B_j$ to construct $\eta_j$ and $b_j$ as before. 
To decompose now $\sum_j b_j\eta_j$,
we first normalise the functions $\eta_j$, letting
\[
\tilde{\eta}_j = \frac{\eta_j}{\int \eta_j \wrt \ga}.
\]
Then
 $b_j\eta_j = \int f \eta_j \wrt \ga \; \tilde{\eta}_j$, and clearly
\[
\sum_{j\ge k}\int f \eta_j \wrt \ga = \int f \mu_k \wrt \ga, \quad k \in \BZ,
\]
where
$\mu_k(x) = \sum_{j\ge k} \eta_j(x).$ Notice that $\int f \mu_k \wrt \ga
\to 0$ as $k \to \pm \infty$, in view of (\ref{int0}). A summation by parts now yields
\begin{equation}
  \label{eq:**}
  \sum_{j\in \BZ} \int f \eta_j \wrt \ga \; \tilde{\eta}_j
=  \sum_{k\in \BZ} \int f \mu_k \wrt \ga \; (\tilde{\eta}_k - \tilde{\eta}_{k-1}).
\end{equation}
But $\tilde{\eta}_k - \tilde{\eta}_{k-1}$ is $C$ times a Gaussian
 atom, if we use
admissible balls at some scale $s>1$ in the definition of atoms. Thus
(\ref{eq:**}) is our desired atomic decomposition of  $\sum_j b_j\eta_j$,
provided we can estimate the coefficients by showing that
\begin{equation}\label{estmuj}
 \sum_{k\in \BZ}\left| \int f \mu_k \wrt \ga \right|
\le\,C\,\left(\norm{f}{1}+ E(f)\right).
\end{equation}
To this end, observe that
$$
 \int f \mu_k \wrt \ga =\int f(x)\int_{-\infty}^x\mu'_k(y)\wrt\lambda(y)\wrt\gamma(x)=\int 
\mu'_k(y)\int_y^\infty f(x)\wrt\gamma(x)\wrt\lambda(y).
$$
Since the support of $\mu'_k$ is contained in $I_k$ and
$$
\mod{\mu_k'(y)}\,\le\, \frac{C}{\mod{I_k}}\,\le\, C \,\big(1+\mod{c_{I_k}}
\big),
$$
we obtain, using also the bounded overlap of the $I_j$,
\begin{align*}
  \sum_{k\in \BZ}\left| \int f \mu_k \wrt \ga \right| &\,\le\,C \sum_k\int_{I_k}\big(1+\mod{c_{I_k}}\big)\ 
\mod{\int_y^\infty f(x)\wrt\gamma(x)}\wrt\lambda(y) \\
&\le C\int_{-\infty}^\infty (1+\mod{y}) \mod{\int_y^\infty f\wrt \ga}\wrt
\lambda(y) \\
&= C\int_{0}^\infty (1+y) \left(\mod{\int_y^\infty f\wrt \ga}+\mod{\int_{-y}
^\infty f\wrt \ga}\right)\wrt\lambda(y) \\ 
&\le\, C\big(\norm{f}{1}+ \,E(f)\big);
\end{align*}
here we used  (\ref{int0}). 
This concludes the proof of Theorem \ref{char}. 
\end{proof}

\section{A characterisation of nonnegative functions in $H^1(\gamma)$}\label{maxcharpos}
The dimension $n$ is now arbitrary. The following lemma will be needed.
\begin{lemma}\label{trch}
Let $\phi _0=\gamma\big(B(0,1)\big)^{-1}\,\One_{B(0,1)}$. If $g \in L^\infty$ is  
supported in a maximal admissible ball $B$, then
$$
\bignorm{g-\int g \wrt\gamma\,\phi _0}{H^1(\gamma)}\le C\big(1+\mod{c_B}^2\big)
\gamma(B)\norm{g}{L^\infty}.
$$
\end{lemma}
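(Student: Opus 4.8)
The plan is to split off an atom supported on $B$ and reduce everything to estimating the $H^1(\ga)$ norm of the difference of two normalised bumps, which I will then control by a chaining argument. Throughout write $m = \int g\wrt\ga$ and, for a ball $A$, let $\psi_A = \ga(A)^{-1}\One_A$ be its $L^1(\ga)$-normalised indicator, so that $\phi_0 = \psi_{B(0,1)}$ and $\int\psi_A\wrt\ga = 1$. I would first write
$$
g - m\,\phi_0 = \bigl(g - m\,\psi_B\bigr) + m\,\bigl(\psi_B - \phi_0\bigr).
$$
The first summand is supported in $B$ and has vanishing $\ga$-integral; since $|m| \le \ga(B)\norm{g}{L^\infty}$, its sup norm is at most $2\norm{g}{L^\infty}$, so it equals $2\ga(B)\norm{g}{L^\infty}$ times a Gaussian atom associated to $B$ and has $H^1(\ga)$ norm at most $2\ga(B)\norm{g}{L^\infty}$. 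Using $|m|\le\ga(B)\norm{g}{L^\infty}$ again, the lemma reduces to the single estimate
$$
\norm{\psi_B - \phi_0}{H^1(\ga)} \le C\bigl(1 + |c_B|^2\bigr).
$$

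To prove this I would connect $B(0,1)$ to $B$ by a chain of admissible balls and telescope. Note that $\psi_B - \phi_0$ has zero $\ga$-integral. If $|c_B|\le 1$, then $r_B = 1$ and a bounded number of balls suffices, so assume $\rho := |c_B| > 1$ and put $u = c_B/\rho$. Fixing a small $\ep>0$, I would take $t_0 = 1$, $t_{k+1} = t_k + \ep\,\min(1, 1/t_k)$, centres $x_k = t_k u$, and balls $B_k = B\bigl(x_k, \min(1,1/t_k)\bigr)$, stopping at the first $N$ with $t_N\ge\rho$ and replacing $B_N$ by $B$ itself; thus $B_0 = B(0,1)$ and $B_N = B$. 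Each $B_k$ is then maximal admissible, consecutive balls overlap, and $B_k\cup B_{k+1}$ is contained in a ball $\tilde B_k$ admissible at a fixed scale $s$ (one checks $s=2$ works). The choice of the radii $\min(1,1/t_k)$ guarantees that consecutive radii, and hence by (\ref{lgeq}) and the doubling property the measures $\ga(B_k)$, $\ga(B_{k+1})$ and $\ga(\tilde B_k)$, are comparable with a dimensional constant.

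The decisive point is the count of balls. For $t_k>1$ one has $t_{k+1}^2 - t_k^2 = 2\ep + \ep^2/t_k^2 \ge 2\ep$, so each step increases $t_k^2$ by at least a fixed amount; hence reaching $t_N\ge\rho$ from $t_0=1$ requires only $N \le C(1+\rho^2) = C(1+|c_B|^2)$ balls. This quadratic growth is the origin of the factor $1+|c_B|^2$ and is the crux of the argument. Finally I would telescope
$$
\psi_B - \phi_0 = \sum_{k=1}^{N}\bigl(\psi_{B_k} - \psi_{B_{k-1}}\bigr),
$$
observe that each difference is supported in $\tilde B_{k-1}$, has vanishing $\ga$-integral and sup norm at most $C\,\ga(\tilde B_{k-1})^{-1}$, and is therefore a bounded multiple of a Gaussian atom at scale $s$, whose $H^1(\ga)$ norm is $\le C$ by the scale-independence of $H^1(\ga)$ proved in \cite{MM}. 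Summing the $N$ terms gives $\norm{\psi_B-\phi_0}{H^1(\ga)}\le CN\le C(1+|c_B|^2)$, and the lemma follows. The main obstacle is precisely this geometric construction: verifying that the adapted balls stay admissible at one fixed scale with mutually comparable measures while only $\approx|c_B|^2$ of them are needed to travel out to $c_B$.
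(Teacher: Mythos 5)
Your proof is correct and takes essentially the same approach as the paper: both connect $B(0,1)$ to $B$ by a chain of roughly $1+|c_B|^2$ maximal admissible balls---the quadratic count coming in both cases from the observation that each step increases the squared distance of the centre to the origin by a bounded-below amount---and then telescope differences of $L^1(\gamma)$-normalised indicator bumps, each difference being a bounded multiple of a Gaussian atom. The remaining differences are cosmetic: you split off $g-\bigl(\int g\,d\gamma\bigr)\psi_B$ at the outset and use overlapping $\epsilon$-step balls with atoms at scale $2$ (invoking scale invariance of $H^1(\gamma)$), whereas the paper keeps $g$ inside the final term of the telescoping sum and, via intersection balls, produces atoms supported in the maximal admissible balls themselves.
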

\begin{proof}
We shall construct atoms whose supports form a chain connecting $B(0,1)$
to $B$. First
we define a finite sequence of maximal admissible balls
\[
\tilde{B} _0 = B(0,1),\, \tilde{B}_1, \ldots, \tilde{B}_{N },
\]
 all with centres
$c_{\tilde{B}_{j}}$ on the segment $[0, c_B]$. The absolute values
$\rho_j= |c_{\tilde{B}_{j}}|$    shall be increasing in $j$, and
the boundary $\partial \tilde{B}_j$ shall contain $c_{\tilde{B}_{j-1}}$
for $j = 1,\ldots,N-1$, which means that
\begin{align} \label{recursive}
\rho_{j} - \frac{1}{\rho_{j}}=\rho_{j-1}, \qquad j=1,
\ldots,N-1,
\end{align}
and $\rho_{0} = 0, \; \;\rho_{1} = 1$.
Finally, $N$ is defined so that $\tilde{B}_{N-1}$ is the first ball
of the sequence that contains $c_B$, and $\tilde{B}_{N } = B.$
Squaring (\ref{recursive}), we get
$$
\rho_{j}^2-\rho_{j-1}^2=2 - \frac1{\rho_{j}^2}\ge1,
$$
 so that $\rho_{N-1}^2 \ge N-1$. It follows that
 \begin{equation}
  \label{balls}
N \le |c_B|^2 + 1.
 \end{equation}
\par

Next, we denote by $B _j, \;\;j=1,\ldots, N$, the largest ball 
contained in $\tilde{B}_j \cap\tilde{B}_{j-1} $. Notice that the
three balls  $\tilde{B}_j, \tilde{B}_{j-1}$  and  $B _j$ have comparable
radii and comparable Gaussian measures.
Define now functions $\phi  _j$ and $g  _j$ by 
setting
\begin{align*}
\phi  _j&=\gamma(B _j )^{-1} \One_{B _j }  \quad j=1,\ldots,N, 
\\ 
g _j &=   \int  g \wrt\gamma\, \left(\phi_{j} -\phi _{j-1} \right), \quad 
j=1,\ldots,N, \\ 
g_{N+1} &=g -\int g  \wrt \gamma \: \phi_{N}.
\end{align*}
Clearly,
\begin{equation}\label{sumgj}
g-\int g\wrt\gamma\: \phi_0 = \sum_{j=1}^{N+1} g  _j.
\end{equation}
Each function $g  _j$ is a multiple of an atom. Indeed, its integral against
 $\gamma$ vanishes. Moreover,
if $1 \le j \le N$, the support of $g  _j$ is contained in $\tilde{B} _{j-1}$ 
and   
\begin{align*}
\norm{g _j}{\infty}\le (\gamma(B_{j})^{-1} + \gamma(B_{j-1})^{-1}) \int \mod{g}\wrt\gamma\,
\le \,C\, \gamma(\tilde{B}_{j-1})^{-1}\ \gamma (B)\, \norm{g}{L^\infty}.
\end{align*}
The support of $\phi_{N+1}$ is contained in $B$ and
\begin{align*}
\norm{g_{N+1}}{\infty}\le \norm{g}{\infty}+\gamma(B)^
{-1}\int \mod{g}\wrt\gamma 
\le\,C\, \norm{g}{L^\infty}.
\end{align*}
Thus
$$
\norm{g _j}{H^1(\gamma)}\le C\,\gamma(B)\, \norm{g}{L^\infty}, \qquad j=1,\ldots,N+1.
$$
Summing the coefficients in the atomic decomposition (\ref{sumgj}), we then obtain
via (\ref{balls})
\begin{align*}
\norm{g-\int g\wrt\gamma\,\phi_0}{\huat{\gamma}}
\,\le\, C\, (N+1) \,\gamma(B) \norm{g}{L^\infty} \,
\le\,C (1+\mod{c_B}^2)\,\gamma(B) \,\norm{g}{L^\infty}.
\end{align*}
The proof of the lemma is complete.
\end{proof}

\begin{theorem}\label{pos}
Suppose that $f$ is a function in $L^1(\gamma)$.
If $\mlo f$ is in $L^1(\gamma)$ and
\begin{equation}\label{nec}
E_+(f)=\int \mod{x}^2 \mod{f(x)}\wrt\gamma(x) <\infty,
\end{equation}
then $f$  is in in $\huat{\gamma}$ and 
\[
\norm{f}{\huat{\gamma}} \le C \norm{\mlo f}{1} + C E_+(f).  
\]
 If $f$ is nonnegative,  the conditions $\mlo{f}\in L^1(\gamma)$ and 
$E_+(f)<\infty$ are also necessary for $f$ to be in $\huat{\gamma}$.
\end{theorem}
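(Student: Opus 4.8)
The plan is to prove sufficiency by reusing the portion of the proof of Theorem~\ref{char} that is valid in $\BR^n$, and then to dispose of the remaining piece with Lemma~\ref{trch}. As in that proof, after subtracting a multiple of the exceptional atom $1$ (whose coefficient is $\mod{\int f\wrt\ga}\le\norm{f}{1}$) I may assume $\int f\wrt\ga=0$. The construction leading to \pref{aat} is carried out in arbitrary dimension and produces
$$
f=\sum_{j,k}\lambda_{jk}\,a_{jk}+\sum_j b_j\eta_j,\qquad \sum_{j,k}\mod{\lambda_{jk}}\le C\,\norm{\mlo f}{1},
$$
where the $a_{jk}$ are Gaussian atoms, the $B_j$ are maximal admissible balls with the bounded overlap property, $\eta_j$ is the partition of unity supported in $B_j$, and $b_j$ is given by \pref{mz}. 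Thus the only new task is to find an atomic decomposition of $\sum_j b_j\eta_j$ whose coefficient sum is controlled by $E_+(f)$.

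Each $b_j\eta_j$ is bounded and supported in the maximal admissible ball $B_j$, so Lemma~\ref{trch} applies with $g=b_j\eta_j$. Using $\norm{b_j\eta_j}{\infty}\le\mod{b_j}$ together with the bound \pref{1st} for $b_j$, this gives
$$
\Bignorm{b_j\eta_j-\int b_j\eta_j\wrt\ga\,\phi_0}{\huat{\gamma}}\le C\,(1+\mod{c_{B_j}}^2)\,\gamma(B_j)\,\norm{b_j\eta_j}{\infty}\le C\,(1+\mod{c_{B_j}}^2)\int_{B_j}\mod{f}\wrt\ga.
$$
Since $B_j$ is maximal admissible, $r_{B_j}\le1$, so $1+\mod{c_{B_j}}^2\le C(1+\mod{x}^2)$ for $x\in B_j$; summing over $j$ and invoking the bounded overlap of the $B_j$ yields $\sum_j(1+\mod{c_{B_j}}^2)\int_{B_j}\mod{f}\wrt\ga\le C\int(1+\mod{x}^2)\mod{f}\wrt\ga=C(\norm{f}{1}+E_+(f))$. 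The correction terms telescope away: by \pref{mz} and \pref{int0}, $\sum_j\int b_j\eta_j\wrt\ga=\sum_j\int f\eta_j\wrt\ga=\int f\wrt\ga=0$, so $\sum_j b_j\eta_j=\sum_j\big(b_j\eta_j-\int b_j\eta_j\wrt\ga\,\phi_0\big)$, the series converging in $\huat{\gamma}$, and hence $\norm{\sum_j b_j\eta_j}{\huat{\gamma}}\le C(\norm{f}{1}+E_+(f))$. Finally, fixing a $\phi\in\Phi$ with $\int\phi\,\wrt\lambda>0$ and letting $t\to0$ shows $\mod{f}\le C\,\mlo f$ a.e., whence $\norm{f}{1}\le C\norm{\mlo f}{1}$; combining the three estimates gives $\norm{f}{\huat{\gamma}}\le C\norm{\mlo f}{1}+C\,E_+(f)$.

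For necessity, assume $f\ge0$ lies in $\huat{\gamma}$. That $\mlo f\in L^1(\gamma)$ is exactly Lemma~\ref{necn}. To control $E_+(f)$, the key observation is that $\mod{x}^2$ has oscillation bounded by a dimensional constant on every admissible ball: for $x,x'$ in an admissible $B$ one has $\bigmod{\mod{x}^2-\mod{x'}^2}\le C\,r_B(\mod{c_B}+r_B)\le C$, since $r_B\le\min(1,1/\mod{c_B})$. I would fix a decomposition $f=\sum_j\lambda_j a_j$ with $\sum_j\mod{\lambda_j}\le\norm{f}{\huat{\gamma}}+\vep$ and test against the truncation $\psi_M=\min(\mod{x}^2,M)\in L^\infty$. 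Because each $a_j$ has vanishing $\gamma$-integral, subtracting a suitable constant and using the oscillation bound gives $\bigmod{\int a_j\psi_M\wrt\ga}\le C$ (the constant atom being handled directly via $\int\psi_M\wrt\ga\le\int\mod{x}^2\wrt\ga=C$). Since $\psi_M$ is bounded, $0\le\int f\psi_M\wrt\ga=\sum_j\lambda_j\int a_j\psi_M\wrt\ga\le C\sum_j\mod{\lambda_j}\le C\norm{f}{\huat{\gamma}}$, uniformly in $M$, after letting $\vep\to0$. Letting $M\to\infty$ and invoking monotone convergence (here $f\ge0$ is essential) yields $E_+(f)=\int\mod{x}^2 f\wrt\ga\le C\norm{f}{\huat{\gamma}}<\infty$.

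The main obstacle is the sufficiency half, namely the passage from the piece $\sum_j b_j\eta_j$—which carries no smallness and whose constituents sit on balls arbitrarily far from the origin—to an atomic decomposition. Lemma~\ref{trch} furnishes the chain of atoms joining each $B_j$ to the unit ball, but at the quadratic cost $1+\mod{c_{B_j}}^2$; the whole argument hinges on the fact that this cost is precisely absorbed by the weight $\mod{x}^2$ in $E_+$, and that the $\phi_0$-corrections cancel thanks to $\int f\wrt\ga=0$. In the necessity direction the only delicate point is that absolute integrability of $\mod{x}^2\mod{f}$, rather than mere boundedness of the signed pairing $\int\mod{x}^2 f\wrt\ga$, forces the sign hypothesis $f\ge0$, which is exactly what confines this characterisation to nonnegative functions.
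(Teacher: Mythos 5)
Your proposal is correct and follows essentially the same route as the paper's proof: the same reduction to $\int f\wrt\gamma=0$, the same reuse of the $\BR^n$-valid part of the proof of Theorem~\ref{char} giving the decomposition \pref{aat}, the same application of Lemma~\ref{trch} to the pieces $b_j\eta_j$ with the quadratic cost $1+\mod{c_{B_j}}^2$ absorbed by $E_+(f)$ and the $\phi_0$-corrections cancelling because $\sum_j\int b_j\eta_j\wrt\gamma=0$, and the same truncation-plus-monotone-convergence argument for necessity. The only deviations are cosmetic: in the necessity part you pair the atoms directly against $\min(\mod{x}^2,M)$ via the bounded oscillation of $\mod{x}^2$ on admissible balls, where the paper instead cites the $H^1$--$BMO(\gamma)$ duality together with the lattice property of $BMO(\gamma)$, and you make explicit the harmless step $\norm{f}{1}\le C\norm{\mlo f}{1}$ that the paper leaves implicit.
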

\begin{proof} 
Let $f$ be a  function in $L^1(\gamma)$ such that $\mlo f\in 
L^1(\gamma)$ and  $E_+(f)<~\infty$. Write $f=c(f)+f_0$, where $c(f)=\int 
f\wrt\gamma$. Since $c(f)$ is a multiple of the exceptional atom, it 
suffices to find an atomic decomposition of $f_0$. Note that $f_0$ satisfies
$$
\mlo f_0 \in L^1(\gamma) \qquad\textrm{and}\qquad \int\mod{x}^2\, 
\mod{f_0(x)}\wrt\gamma(x)<\infty.
$$\par
Let $\set{B_j}$ be the  covering of $\BR^n$ by maximal admissible 
balls and $\set{\eta_j}$ the corresponding partition of 
unity introduced in the proof of Theorem \ref{char}.  As there, we
choose numbers $b_j\in\BC$ such that
$$
\int_{-\infty}^\infty (f_0-b_j)\eta_j \wrt \gamma =0 \qquad\forall j.
$$
Then the argument leading to (\ref{aat}) shows that
$$ f_0=\sum_j \sum_k \lambda_{jk} \,a_{jk} +\sum_j b_j \eta_j,
$$
where the $a_{jk}$ are  Gaussian atoms supported 
in $4B_j$ and 
$$
\sum_{j,k} \mod{\lambda_{jk}}\le\,C\, \norm{\mlo f_0}{L^1(\gamma)}.
$$
It remains only to prove that $\sum_j b_j \eta_j$ is in $\huat
{\gamma}$. We  write $g_j=b_j
\eta_j$ and observe that
\begin{equation}\label{intbeta}
\int \sum_j g_j\wrt\gamma=0
\end{equation}
because $f_0$ and the $a_{ij}$ have integrals zero. Thus
$$
\sum_j g_j=\sum_j\Big(g_j-\int g_j\wrt\gamma \,\phi_0\Big),
$$
where $\phi_0$ is as in Lemma \ref{trch}.
Since (\ref{1st}) remains valid for $f_0$, we have
\begin{equation}\label{n1gj}
\norm{g_j}{\infty}\le C\,\frac{1}{\gamma(B_j)} \int_{B_j}\mod{f_0}\wrt
\gamma.
\end{equation}
Lemma \ref{trch} thus applies to each $g_j$, and using also  
the bounded overlap 
of the  $B_j$ we conclude
\begin{align*}
\norm{\sum_j g_j}{H^1(\gamma)}\,\le \, C\ \sum_j\big(1+\mod{c_{B_j}}^2\big)\ \int_{B_j} \mod{f_0}\wrt\gamma 
\,\le \, C\,\int \big(1+\mod{x}^2\big)\, \mod{f_0}\wrt\gamma.
\end{align*}
This concludes the 
proof of the sufficiency and the norm estimate.
\par

The necessity of the condition $\mlo f\in L^1(\gamma)$   was obtained in
Lemma \ref{necn}. 

 To prove the necessity of (\ref{nec}), let   $0 \le f \in \huat{\gamma}$. 
We first observe
that the function $x\mapsto \mod{x}^2$ is in $BMO(\ga)$. Indeed, its
oscillation on any admissible ball is bounded. 
Since $BMO(\gamma)$ is a lattice, the 
functions $g_k(x)=\min\set{\mod{x}^2,k}$ are 
in $BMO(\gamma)$, uniformly for  $k \ge 1$.
  By the monotone convergence theorem and the 
duality between $\huat{\gamma}$ and $BMO(\gamma)$,
$$
\int \mod{x}^2 f(x)\wrt\gamma(x)=\lim_k \int g_k(x) f(x)\wrt\gamma(x)
\le C \norm{f}{\huat{\gamma}}.
$$
The theorem is proved.
\end{proof}

The following result is a noteworthy consequence of Theorem \ref{pos}.
\begin{corollary}\label{Lp}
For $1<p\le\infty$, one has  continuous inclusions $L^p(\gamma)
\subset\huat{\gamma}$ and $BMO(\gamma)\subset L^{p'}(\gamma)$, where
$p' = p/(p-1)$.
\end{corollary}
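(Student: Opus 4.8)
The plan is to deduce both inclusions from Theorem \ref{pos} together with the known duality between $\huat{\gamma}$ and $BMO(\gamma)$, plus a Hölder estimate. The two inclusions are in fact dual to one another, so I would prove the inclusion $L^p(\gamma)\subset\huat{\gamma}$ first and then obtain $BMO(\gamma)\subset L^{p'}(\gamma)$ by duality.

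For the inclusion $L^p(\gamma)\subset\huat{\gamma}$ I would verify the two hypotheses of Theorem \ref{pos} for any $f\in L^p(\gamma)$, $1<p\le\infty$. The condition $E_+(f)=\int|x|^2\,|f(x)|\wrt\gamma(x)<\infty$ follows immediately from Hölder's inequality: writing $\int|x|^2|f|\wrt\gamma\le\norm{f}{p}\,\bignorm{\,|x|^2}{p'}$, the factor $\bignorm{\,|x|^2}{p'}$ is finite because $|x|^{2p'}\e^{-|x|^2}$ is integrable for every $p'<\infty$ (and the case $p=\infty$, $p'=1$ is even easier). For the condition $\mlo f\in L^1(\gamma)$, I would bound the local grand maximal function pointwise. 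Since $\mlo f(x)=\sup\set{\mod{\phi_t*f(x)}:\phi\in\Phi,\ 0<t<\min(1,1/|x|)}$ and each $\phi_t$ has $L^1(\lambda)$-norm at most a constant, $\mlo f$ is dominated by a local Hardy--Littlewood maximal function of $f$ restricted to the admissible ball around $x$; on such balls the Gauss and Lebesgue measures are equivalent by \pref{lgeq}, so standard $L^p$ boundedness of the maximal function gives $\norm{\mlo f}{p}\le C\norm{f}{p}$, and then $\norm{\mlo f}{1}\le C\norm{\mlo f}{p}\le C\norm{f}{p}$ since $\gamma$ is a probability measure. Thus both hypotheses hold, and the norm estimate in Theorem \ref{pos} gives $\norm{f}{\huat{\gamma}}\le C\norm{f}{p}$.

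For the second inclusion, I would use that $BMO(\gamma)$ is isomorphic to the dual of $\huat{\gamma}$, as recalled in the introduction. Given $u\in BMO(\gamma)$, the pairing $f\mapsto\int f\,u\wrt\gamma$ defines a bounded linear functional on $\huat{\gamma}$ with norm comparable to $\norm{u}{BMO(\gamma)}$. Restricting this functional to $L^p(\gamma)$ and using the continuous inclusion $L^p(\gamma)\subset\huat{\gamma}$ just proved, one sees that $u$ defines a bounded functional on $L^p(\gamma)$ with norm at most $C\norm{u}{BMO(\gamma)}$. Since $L^p(\gamma)^*=L^{p'}(\gamma)$ for $1<p\le\infty$ (with $1\le p'<\infty$), this means $u\in L^{p'}(\gamma)$ with $\norm{u}{p'}\le C\norm{u}{BMO(\gamma)}$, which is exactly the claimed continuous inclusion.

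The main obstacle, and the only step requiring genuine care, is the pointwise control of $\mlo f$ by a maximal operator that is bounded on $L^p(\gamma)$. The subtlety is that the admissibility constraint $0<t<\min(1,1/|x|)$ localises the averages to admissible balls, precisely where the non-doubling Gauss measure behaves like a constant multiple of Lebesgue measure by \pref{lgeq}; one must check that the equivalence constant can be absorbed uniformly and that the resulting truncated maximal operator is genuinely of local (hence doubling) type, so that the classical $L^p$ theory applies. Once this localisation is handled, everything else is a routine application of Hölder's inequality, Theorem \ref{pos}, and the duality statement.
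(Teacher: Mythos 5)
Your overall route coincides with the paper's: verify the two hypotheses of Theorem \ref{pos} for $f\in L^p(\gamma)$ (H\"older's inequality for $E_+(f)$, and an $L^p(\gamma)$ bound for $\mlo$ giving $\mlo f\in L^1(\gamma)$ since $\gamma$ is a probability measure), then deduce $BMO(\gamma)\subset L^{p'}(\gamma)$ by duality. However, there is a genuine gap: the one step that carries all the content, namely the claim $\norm{\mlo f}{p}\le C\norm{f}{p}$, is exactly what the paper's proof establishes, and you assert it rather than prove it. Your justification --- ``on admissible balls the Gauss and Lebesgue measures are equivalent by \pref{lgeq}, so standard $L^p$ boundedness of the maximal function applies'' --- does not work as stated, because the equivalence in \pref{lgeq} is only local, with a proportionality factor $\gamma_0(c_B)$ that varies wildly from ball to ball; there is no single comparison between $L^p(\gamma)$ and $L^p(\lambda)$ norms valid on all of $\BR^n$. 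You acknowledge this yourself in your closing paragraph (``one must check that the equivalence constant can be absorbed uniformly\dots''), but you leave the check as a to-do, which in a proof is a gap.

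The missing idea is the localisation the paper performs with the covering from the proof of Theorem \ref{char}: take the covering $\set{B_j}$ of $\BR^n$ by maximal admissible balls, for which $\set{4B_j}$ has bounded overlap, and split $f=\sum_j f_j$ with $\supp f_j\subset B_j$ and the sets $\set{f_j\ne 0}$ pairwise disjoint. The crucial point, supplied by Lemma \ref{supp}, is that $\mlo f_j$ is supported in $4B_j$: both $f_j$ and $\mlo f_j$ then live on the same ball $4B_j$, which is admissible at the fixed scale $4$, so \pref{lgeq} holds there with a constant independent of $j$; passing to Lebesgue measure, applying the classical $L^p(\lambda)$ boundedness of the grand maximal function, and returning to $\gamma$, the ball-dependent factor $\gamma_0(c_{B_j})$ appears on both sides and cancels. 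One then sums over $j$ using the disjointness of the supports of the $f_j$ and the bounded overlap of the $4B_j$. Without the support information from Lemma \ref{supp} this cancellation is unavailable, and the argument collapses. A minor additional slip: your appeal to $L^p(\gamma)^*=L^{p'}(\gamma)$ fails at $p=\infty$, since the dual of $L^\infty(\gamma)$ is not $L^1(\gamma)$; for that endpoint simply note that $u\in BMO(\gamma)\subset L^1(\gamma)$ by the very definition of $BMO(\gamma)$, or use the norm formula $\norm{u}{1}=\sup\set{\mod{\int fu\wrt\gamma}:\norm{f}{\infty}\le 1}$, valid for any $u\in L^1(\gamma)$.
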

\begin{proof}
We claim that the operator $\mlo$ is bounded on $L^p(\gamma)$ for 
$1<p\le\infty$. Deferring momentarily the proof of this claim, we 
complete the proof of the corollary. Suppose that $f$ is in $L^p
(\gamma)$. Then $\mlo f$ is in $L^1(\gamma)$, because
$$
\norm{\mlo f}{1}\le \norm{\mlo f}{p}\, \le C \norm{f}{p}<\infty,
$$
since $\gamma(\BR^n)=1$. Moreover, $E_+(f)\le \norm{\mod{x}^2}{p'}\,
\norm{f}{p} < \infty$, by H\"older's inequality. Thus $f\in \huat{\gamma}$ by 
Theorem \ref{pos}. It also follows that the inclusion 
$L^p(\gamma)\subset \huat{\gamma}$ is 
continuous, and by duality we get the continuous inclusion  
$BMO(\gamma)\subset L^{p'}(\gamma)$.\par
It remains to prove the claim.
We shall use again the
covering $\{B_j\}$ from the proof of Theorem \ref{char}. 
First  we observe that the inequality
\begin{equation}  
\label{eq:Lp}
\norm{\mlo g}{p} \le C\norm{g}{p}
\end{equation}
holds when $\supp g \subset B_j$, with a constant $C$  independent of $j$. 
Indeed, $\mlo$ is bounded on $L^p(\la)$, and $\mlo g$ is supported in the  
ball $4B_j$, where the Gaussian
measure is essentially proportional to $d\la$.

 Given a function $f \in L^p(\ga)$, we write it 
as a sum $f = \sum f_j$ with  $\supp f_j \subset B_j$ and with the
sets $\{f_j \ne 0\}$ pairwise disjoint. We can then apply (\ref{eq:Lp})
to each $f_j$ and sum.
\end{proof}

%%%%%%%%%%%@@@@@@@@@@%%%%%%%%%
\end{document}